\theoremstyle{plain}
\newtheorem*{theorem*}{Theorem}
\newtheorem{theorem}{Theorem}
\newtheorem{proposition}[theorem]{Proposition}
\newtheorem*{maintheorem*}{Main Theorem}
\newtheorem*{proposition*}{Proposition}
\newtheorem*{corollary*}{Corollary}
\newtheorem{lemma}[theorem]{Lemma}
\newtheorem*{lemma*}{Lemma}
\theoremstyle{definition}
\newtheorem{remark}[theorem]{Remark}
\newtheorem*{remark*}{Remark}
\newtheorem*{remarks*}{Remarks}
\newtheorem*{conjecture*}{Conjecture}
\theoremstyle{definition}
\newtheorem*{definition}{Definition}
\pgfplotsset{compat=1.16}
\newcommand{\C}{\mathbb{C}}
\newcommand{\B}{\mathbb{B}}
\newcommand{\N}{\mathbb{N}}
\newcommand{\R}{\mathbb{R}}
\renewcommand{\P}{\mathcal{P}}
\newcommand{\supp}{\operatorname{Supp}}
\newcommand{\lilk}{\lambda\in\Lambda_k}
\title[Multiple sampling and interpolation  on the 
sphere] 
{Multiple sampling and interpolation  \\
in a space of polynomials} 
\author[C.A. Cruz, X. Massaneda, J. Ortega-Cerd\`a]{Carlos A. Cruz,
Xavier Massaneda, Joaquim Ortega-Cerd\`a}
\address{C.A. Cruz, X. Massaneda: Departament de Matem\`atiques i Inform\`atica,
Universitat  de Barcelona, Gran Via 585, 08007-Bar\-ce\-lo\-na, Spain}
\address{J. Ortega-Cerd\`a: Departament de Matem\`atiques i Inform\`atica,
Universitat  de Barcelona and CRM, Centre de Recerca Matemàtica, Bellaterra, Spain}
\thanks{Second and third authors partially supported by the Generalitat de Catalunya (grant 2021 SGR 00087) and the Spanish Ministerio de Ciencia e Innovaci\'on (project PID2021-123405NB-I00). Third author also supported by the Spanish State Research Agency, through the
Severo Ochoa and Mar\'{\i}a de Maeztu Program for Centers and Units of Excellence in
R\&D (CEX2020-001084-M).}
\date{\today}
\keywords{}
\subjclass{}
\begin{document}

\begin{abstract} 
 We study sampling and interpolation arrays with multiplicities for the spaces 
$\P_k$ of holomorphic polynomials of degree at most $k$. 
We find that the geometric conditions satisfied by these arrays are in 
accordance with the conditions satisfied by the sampling and interpolating 
sequences with unbounded multiplicities in the Fock space, which can be seen as 
a limiting case of the space $\P_k$ as $k$ tends to infinity.
\end{abstract}

\maketitle

\section{Introduction and main results}
The space of homogeneous polynomials of degree $k$ in two variables 
of the form $Q(z,w) = \sum_{j = 0}^k a_j z^j w^{k-j}$
can be endowed with the norm given by
\begin{equation}\label{norma}
   \|Q\|^2_k = (k+1)\int_{\partial \mathbb B} |Q(z,w)|^2 \, d\mu,
\end{equation}
where $\mu$ is the surface measure on the unit ball $\mathbb B$ in 
two complex variables, normalized such that $\mu(\partial \mathbb B)=1$.
The monomials are orthogonal and an elementary computation, see 
\cite[Prop. 1.4.4]{Rudin}, shows that
$$
\|Q\|^2_k = \sum_{j= 0}^k \frac{|a_j|^2}{\binom{k}{j}}. 
$$
This is known as the Bombieri norm (or sometimes the Weyl norm) of an 
homogeneous polynomial. Remark that we have introduced the factor $(k+1)$ in 
\eqref{norma} so that $\|z^k\|_k = 1$.

To every homogeneous polynomial $Q$ of degree $k$ of two 
variables, we can associate a holomorphic polynomial in one complex 
variable $P(z) = Q(z,1) = \sum_{j = 0}^k a_j z^j$ of at most degree $k$ endowed 
with the same norm: $\|P\|^2_k = \sum_{j= 0}^k |a_j|^2/\binom{k}{j}$. 
We denote by $\P_k$ this space of polynomials. It is a reproducing kernel 
Hilbert space, when we endow it with the associated scalar product.

The most interesting geometric way of interpreting this norm is the following:
Consider any polynomial $p\in \P_k$ and, for every point $z\in \mathbb C$,
consider the weighted pointwise norm $s(z) = \frac{|p(z)|}{(1+|z|^2)^{k/2}}$. 
Then 
\[
   \|p\|_k^2 = (k+1) \int_{\mathbb S^2} s(\Pi(w))^2 d\Theta(w),
\]
where $\Pi$ is the stereographic projection from the sphere $\mathbb S^2$ in 
$\mathbb R^3$ to the complex plane and $\Theta$ is the probability measure in 
$\mathbb S^2$ invariant under rotations. The pushforward of $\Theta$ by $\Pi$ is 
given by
\[
 \nu =\Pi^*\Theta=\frac{dm(z)}{\pi(1+|z|^2)^2},\quad z\in\C.
\]
Here $dm$ denotes the Lebesgue measure in the plane.
Thus, the norm of any $p\in \P_k$ can be written directly in the complex plane as
\[
 \|p\|_{k,2}
 =\left((k+1)\int_{\C} \frac {|p(z)|^2}{(1+|z|^2)^k}\, d\nu(z)\right)^{1/2}.
\]
The normalizing constant $k+1$ ensures that $\|1\|_{k,2}=1$.

\subsection{On the unreasonable ubiquity of the space $\P_k$}
The space of polynomials $\P_k$ with its associated scalar product has 
appeared repeatedly in the literature in different contexts.

The fact that the group of rotations $SU(2)$ 
acts isometrically on the space of homogeneous polynomials in two variables 
endowed with the $L^2$ norm on the unit sphere of $\C^2$, entails that for each 
$k$ we have an irreducible representation of $SU(2)$. This makes this space 
interesting from the point of view of mathematical physics. In this context, the 
space $\P_k$ represents spin quantum states of spin $k/2$, see \cite{Bodmann}. 
The generalized Wherl entropy
conjecture proved by Lieb and Solovej in \cite{LiebSolovej} can be rephrased in 
terms of the polynomials in $\P_k$ as to whether the 
maximizer of a convex functional among all polynomials in $\P_k$ with norm one 
is attained exactly at the reproducing kernel. For a different solution, see 
\cite{Frank23} and \cite{KNOT22}. 

Another situation where this space of function arises is in the study of random 
point processes in the unit sphere in $\R^3$ with a distribution that is
invariant under rotations. Two processes that exhibit such invariance are naturally associated to $\P_k$. The first is obtained by taking any random element,
with uniform distribution in 
the unit sphere of $\P_k$, and map its complex zeros through the 
inverse of the stereographic projection to the sphere in $\R^3$. This is the 
spherical Gaussian analytic function (GAF) that is studied, for instance, in 
\cite[Chapter~2]{GAFbook}. The other point 
process is the so-called spherical ensemble. This is a point process that can 
be realized either by the generalized eigenvalues of some ensemble of random 
matrices or through a determinantal point process induced by the reproducing 
kernel of $\P_k$, as shown by Krishnapur in 
\cite{Krishnapur}. Both point 
processes, the spherical GAF and the spherical ensemble, have very good 
discrepancy estimates and are very interesting from the 
point of view of potential theory, since typically they provide points with very small 
logarithmic energy in the sphere. This was observed by Alishahi and Zamani in 
\cite{AliZam} for the spherical ensemble 
and by Armentano, Beltran and Schubb \cite{Beltran} for the spherical GAF. They 
are also very useful as nodes 
for Monte Carlo integration methods, as studied by Berman, see for instance 
\cite{Berman23}. 

Another example where $\P_k$ shows up is in the study of sharp constants for 
estimates from below on the norm of product of polynomials. The natural norm in 
$\P_k$ admits very precise estimates. For instance if $p\in \P_N$ factors as 
$p=p_1p_2$, where $p_j$ has degree $N_j$, with $N=N_1+N_2$ then Beauzamy, 
Bombieri, Enflo and Montgomery proved in \cite{BBEM} that
\begin{equation}
\frac{N_1!N_2!}{N!}\,\|p_1\|^2_{N_1}\|p_2\|^2_{N_2}\le\|p\|^2_{N}.
\label{eq:Bomb0}
\end{equation}
and the inequality is sharp. This is usually called the Bombieri inequality. 

Another problem, involving the Bombieri norm, was posed by Rudin: find
an orthonormal basis of homogeneous polynomials of 
degree $k$ in $L^2(\partial \B)$ with bounded uniform norm. The original motivation 
was to build an inner function in several complex variables. The basis
problem was solved by Bourgain in dimension two in \cite{Bourgain1} and in 
dimension three in \cite{Bourgain2}. In higher dimensions, it is still an open 
problem.  
In dimension two, the problem can be rephrased as a problem in the $\P_k$ space: for
each $k$, find an orthonormal basis of polynomials $p_i\in \P_k$ such that for all 
$i = 0,\ldots, k$, $\sup_z\frac{|p_i(z)|}{(1+|z|^2)^{k/2}}\le C$, where $C$ is 
independent of $k$.

In complex geometry, the spaces $\P_k$ are the toy model for the space of 
holomorphic sections of high powers of a positive line bundle over a compact 
manifold. 
The model example is the complex projective space $X = \mathbb{CP}^n$ 
with
the hyperplane bundle $L = \mathcal O(1)$, endowed with the Fubini-Study metric.
The $k$'th power of $L$ is denoted $\mathcal O(k)$, and the holomorphic 
sections
to $\mathcal O(k)$ can be identified with the homogeneous polynomials of degree 
at most $k$ in $n+1$ variables. This is exploited explicitly in \cite{LOC}, 
where sampling and interpolation arrays are studied for general sections of 
positive holomorphic line bundles, but more precise results are obtained when 
$n=1$ for the model case $\P_k$.

Another use of the space $\P_k$ is that as we increase the degree, 
the space looks closer to the Fock space, i.e., the space of entire functions that are in 
$L^2$ of a Gaussian weight. This is well-known in the Mathematical Physics 
literature, where one approximates Glower coherent states (corresponding to
reproducing kernels of the Fock space) by quantum spin states with high spin 
(corresponding to reproducing kernels in $\P_k$), see \cite{Bodmann} where 
this is explicitly stated. The hope is that by working in a compact setting 
(our functions are essentially defined in the sphere) we avoid some 
technical difficulties, and we can pass to the limit and get results transferred from $\P_k$ 
to the Fock space, see for instance \cite{LOC}. This hope has not been 
realized. Many of the results that are proved in the Fock space and in $\P_k$ 
have been obtained first in the non-compact setting, where the invariant under 
translations seems to be more useful.  

The ubiquity of this space of polynomials makes them very attractive to study 
classical function space problems on them, like  norming sets, 
interpolating sequences, sampling sequences, Toeplitz operators, 
etc.

In this paper we will concentrate on sampling and interpolating arrays, that is 
the discretization of the space of functions through a restriction to a finite 
set (the precise definitions will be given below). The basic case was discussed 
in \cite{LOC} where a description of sampling and interpolation arrays are 
given. In this paper we will introduce multiplicities, that is, we are 
interested in sampling and interpolating not just the values of the polynomial 
but also the derivatives. We will consider unbounded multiplicities, that is, 
we will allow to interpolate/sample more and more derivatives as the degree of 
$\P_k$ increases. Our guiding principle is the paper by 
Borichev, Hartmann, Kellay and Massaneda \cite{BHKM}, where the corresponding 
problem for the Fock space is considered.

\subsection{The geometry}

The \emph{chordal distance} in $\mathbb S^2$, i.e. the Euclidean distance in $\R^3$ restricted to $\mathbb S^2$,  is projected on the plane to the distance
\[
 d(z,w)=\frac{|z-w|}{(1+|z|^2)^{1/2}(1+|w|^2)^{1/2}}, \quad z,w\in\C.
\]
Observe that $d$ is normalized so that the distance between two diametral opposite points is $1$. 

Given $z\in\C$ and $r>1$ we denote by $D(z,r)$ the chordal disc centered at $z$ with radius $r$, i.e.
\[
 D(z,r)=\bigl\{w\in\C : d(z,w)<r\bigr\}.
\]
These discs correspond to spherical caps in $\mathbb S^2$.

Notice that, with the previous notation, 
\[
\|p\|_{k,2}^2=(k+1)\int_{\C} \bigl|p(z) e^{-\frac k2\phi(z)}\bigr|^2\, d\nu(z),
\]
hence $s(z):=p(z) e^{-\frac k2\phi(z)}$ can be seen as a section of a power line bundle over $\mathbb S^2$ with metric induced by $\phi$.

The space $ \mathcal P_k$ is a reproducing kernel Hilbert space with the natural inner product
\[
 \langle p, q\rangle_k=(k+1)\int_{\C}p(z)\, \overline{q(z)}\, \frac{d\nu(z)}{(1+|z|^2)^k},\quad p,q\in\P_{k}.
\]
Thus, for each $w\in\C$ there exists $K_w\in\P_k$ such that 
\[
  \langle p, K_w\rangle_k=p(w)\quad \forall p\in\P_{k}.
\]
It is easy to check that the system
 \begin{equation}\label{ob}
  e_{k,j}(z):=\binom{k}{j}^{1/2} z^j,\quad j=0,\dots,k
 \end{equation}
 is an orthonormal basis of $\P_{k}$. Therefore, the reproducing kernel $K_w$ has the form
 \[
  K_w(z)=K(z,w)=\sum_{j=0}^k \overline{e_{k,j}(w)}\, e_{k,j}(z)=(1+z\bar w)^k.
 \]

\subsection{Sampling and interpolation arrays}
In order to define multiple sampling and interpolation for the family $\{\mathcal P_k\}_{k\geq 1}$ we need to introduce the isometries of $\P_k$.

Both the measure $\nu$ and the chordal distance are obviously invariant by 
the rotations of the sphere, which in the plane take the form of the M\"obius transformations
\begin{equation*}
 \varphi_{\lambda,\theta}(z):=e^{i\theta}\frac{\lambda-z}{1+\bar\lambda z},\quad \lambda,z\in\C,\ \theta\in[0,2\pi).
\end{equation*}
In particular, we denote 
 \begin{equation}\label{eq:automorphism}
 \varphi_\lambda(z)=\varphi_{\lambda,0}(z)=\frac{\lambda-z}{1+\bar\lambda z}.
 \end{equation}
This corresponds to a rotation of $\, \mathbb S^2$ exchanging $\Pi^{-1}(\lambda)$ and $\Pi^{-1}(0)=0$ (where, as before, $\Pi$ indicates the stereographic projection). 

Given $p\in\P_k$ and $\lambda\in\C$ define
 \[
  T_\lambda\, p(z)=\left[\frac{(1+\bar\lambda z)^2}{1+|\lambda|^2}\right]^{k/2}\, p\bigl(\varphi_\lambda(z)\bigr),\quad z\in\C.
 \]
 A straightforward computation, using the identity
  \begin{equation}\label{eq:one-plus}
   1+|\varphi_\lambda(z)|^2=\frac{(1+|\lambda|^2)(1+|z|^2)}{|1+\bar\lambda z|^2},\quad \lambda,z\in\C
  \end{equation}
  and the invariance of $\nu$,
  shows that $T_\lambda$ is an isometry in $\P_k$ with respect to the norm $\|\cdot\|_{k,2}$.

 For each $k\geq 1$ consider a finite set $\Gamma_k=\{\zeta_1^{(k)},\dots, \zeta_{j_k}^{(k)}\}\subset \mathbb S^2$, which is projected to a set $\Lambda_k=\{\lambda_1^{(k)},\dots, \lambda_{j_k}^{(k)}\}$ in $\C$ ($\Pi(\zeta_{j}^{(k)})=\lambda_{j}^{(k)}$).  For each $\lambda\in\Lambda_k$ let $m_\lambda\in \N$ be an associated multiplicity. This gives rise to the vector of multiplicities
 $m^{(k)}=(m_{\lambda_1^{(k)}},\dots, m_{\lambda_{j_k}^{(k)}})\in \N^{j_k}$ associated to $\Lambda_k$.

\begin{definition}
 An array $X=\{(\Lambda_k, m^{(k)}\}_{k\geq 1}$ is \emph{$L^2$-sampling} if there exists $C>0$ independent of $k$ such that for all $p\in\mathcal P_k^2$
 \[
  \frac 1C \|p\|_{k,2}^2\leq \sum_{\lambda\in\Lambda_k}\sum_{j=0}^{m_\lambda-1} \left|\langle p,T_{\lambda} e_{k,j}\rangle_k\right|^2\leq C\|p\|_{k,2}^2.
 \]
 Alternatively, $X$ is a $L^2$-sampling array if $\{T_\lambda e_{k,j}\}_{\lambda\in\Lambda_k, j<m_\lambda}$ is a frame for $\mathcal P_k$, with frame constants independent of $k$.
\end{definition}

Less formally, an array $X$ is $L^2$-sampling when the values $p(\lambda), \dots, p^{(m_\lambda-1)}(\lambda)$, $\lambda\in\Lambda_k$, determine, with uniform control of norms, any polynomial $p\in\P_k$. Since $\mathcal P_k$ is finite dimensional, the whole point in this definition is that the constant $C$ is independent of $k$.

Given an array $X=\{(\Lambda_k, m^{(k)}\}_{k\geq 1}$ consider also associated arrays of values $v_X=\{v_X^k\}_{k\geq 1}$ in the following way: to each $\lambda\in\Lambda_k$, $k\geq 1$, assign a vector of values 
$v_\lambda=(v_{\lambda}^j)_{j<m_\lambda}$ and denote $v_X^k=\{v_\lambda\}_{\lambda\in\Lambda_k}$.

Consider also the $\ell^2$ norm at each level $k$:
\[
 \|v_X^k\|_{\ell^2}=\left(\sum_{\lambda\in\Lambda_k} \|v_\lambda\|_2^2\right)^{1/2}=
 \left(\sum_{\lambda\in\Lambda_k} \sum_{j<m_\lambda} |v_\lambda^j|^2\right)^{1/2}.
\]

 \begin{definition} 
  An array $X=\{(\Lambda_k, m^{(k)}\}_{k\geq 1}$ is \emph{$L^2$-interpolating} if there exists $C>0$ independent of $k$ such that for all $k\geq 1$ and all arrays of values $v_X=\{v_X^k\}_{k\geq 1}$ there exist polynomials $p_k\in\mathcal P_k$ with:
  \begin{itemize}
   \item [(a)] $\langle p_k, T_\lambda e_{k,j}\rangle_k=v_\lambda^j$ for all $\lambda\in\Lambda_k$ and $j<m_{\lambda}$,
   \item [(b)] $\|p_k\|_{2,k}\leq C \|v_X^k\|_{\ell^2}$.
  \end{itemize}
  The minimum such $C$ is called the interpolation constant of $X$, and it will be denoted by $M_X$.
 \end{definition}
 
 Thus, an array $X$ is interpolating when it is possible to prescribe the value and the derivatives up to order $m_\lambda-1$ on all $\lambda\in\Lambda_k$, $k\geq 1$.  At a fixed level $k$, if $\sum_{\lambda\in\Lambda_k} m_\lambda\leq k+1=\text{dim}(\P_k)$ one can always find polynomials $p$ satisfying the system of equations (a). The point in this definition, again, is that this can be done with the uniform control of norms given by (b), independently of $k$.
 
 \begin{remark}
  Sampling and interpolation arrays can be equivalently defined in a more intrinsic way (easier to generalize to other norms). Let $X$ be as before. For each $\lambda\in\Lambda_k$, $k\geq 1$, consider the vanishing subspace of $\P_k$ on $\lambda$:
  \[
N_{\lambda,m_\lambda}=\Bigl\{p\in\P_k : p^{(j)}(\lambda)=\frac{\partial^j p}{\partial z^j}(\lambda)=0, j<m_\lambda\Bigr\}.
  \]
Then $X$ is $L^2$-sampling if there exists $C>0$ such that for all $k \geq 1$ and all $p_k\in \P_k$
 \[
  \frac 1C \|p_k\|_{k,2}^2\leq \sum_{\lambda\in\Lambda_k}\|p_k\|_{\P_k/N_{\lambda,m_\lambda}}^2\leq C\|p_k\|_{k,2}^2.
 \]
 
 Similarly, an array $X$ is $L^2$-interpolating if there exists $C>0$ such that for every $k$ and every vector $(p_\lambda)_{\lambda\in\Lambda_k}\subset\P_k$ there exist $p_k\in\P_k$ such that:
 \begin{itemize}
  \item  [(a)] $p_k-p_\lambda\in N_{\lambda, m_\lambda}$ for all $\lambda\in\lambda_k$,
  \item [(b)] $\|p_k\|_{2,k}^2\le C\sum\limits_{\lambda\in\Lambda_k} \|p_k\|_{\P_k/N_{\lambda,m_\lambda}}^2$.
 \end{itemize}
 \end{remark}
 
 Since $\dim(\P_K)=k+1$ it is clear that the conditions $\sum_{\lambda\in\Lambda_k} m_\lambda\leq k$, for all $k\geq 1$, are necessary.

 \begin{remark}
  A zero set in $\P_k$ has at most $k$ points, so a uniqueness set must have at least $k$ points. In particular, in a sampling array necessarily $\sum_{\lambda\in\Lambda_k} m_\lambda>k$ for all $k\geq 1$ and in an interpolation array $\sum_{\lambda\in\Lambda_k} m_\lambda\leq k$ for all $k\geq 1$. Optimal sampling or interpolation arrays should have $\sum_{\lambda\in\Lambda_k} m_\lambda$ of order $k$ as $k\to\infty$. 
 \end{remark}

 \subsection{Results}
 In order to state the geometric conditions satisfied by sampling or interpolation arrays, we need some definitions.
 
 At each level $k$, we shall consider separation conditions in terms of the rescaled distances $\sqrt k\, d(z,w)$. In particular, for $\lambda\in\Lambda_k$ we shall consider the ``critical'' discs
  \begin{equation}\label{def:critical-disks}
  D_{\lambda,m_\lambda}:=D\Bigl(\lambda, \sqrt{\frac{m_\lambda}k}\Bigr)=\bigl\{z\in\C : \sqrt k\, d(z,\lambda)<\sqrt{m_\lambda}\bigr\}
 \end{equation}

As we shall see, loosely speaking, prescribing the value and the derivatives up to order $m_\lambda$ of a polynomial $p\in\P_k$ at a point $\lambda$  determines its growth in a chordal disc of radius $\sqrt{m_\lambda/k}$.

 \begin{definition}
  An array $X=\{(\Lambda_k, m^{(k)})\}_{k\geq 1}$ has \emph{finite overlap} if for every $z\in\C$ there is at most a fixed number (independent of $k$) of discs $D_{\lambda,m_\lambda}$, $\lambda\in\Lambda_k$, containing $z$, i.e., if
  \[
   S_X:=\sup_{k\geq 1}\sup_{z\in\C} \sum_{\lambda\in\Lambda_k} \chi_{D_{\lambda,m_\lambda}}(z)<\infty.
  \]
 \end{definition}
 
 Since $\nu\bigl(D_{\lambda,m_\lambda}\bigr)=m_\lambda/k$ (see Lemma~\ref{lemma:computations}(a)), the finite overlap condition implies 
 \[
  \sum_{\lambda\in\Lambda_k}\frac{m_\lambda}k=\sum_{\lambda\in\Lambda_k} \nu\bigl(D_{\lambda,m_\lambda}\bigr)
  =\sum_{\lambda\in\Lambda_k}\int_{\C}\chi_{D_{\lambda,m_\lambda}}(z)\, d\nu(z)\leq S_X.
 \]

Henceforth, we shall assume that
 \begin{equation}\label{eq:m-to-zero}
  \lim_{k\to\infty}\frac{\sup\limits_{\lambda\in\Lambda_k} m_\lambda}k=0.
 \end{equation}

 Our conditions for sampling or interpolation are expressed in terms of constant dilations or contractions of the critical disks $D_{\lambda,m_\lambda}$, $\lambda\in\Lambda_k$. 
 Given $\lambda\in\C$, $k\geq 1$ and $m<k$ consider $c>-\sqrt{m}$ and let 
  \[
 D\bigl(\lambda,\frac{\sqrt m +c}{\sqrt k}\bigr)= \{z\in\C : \sqrt k \, d(z,\lambda)\leq \sqrt m +c\}.
 \]
Observe that for $c=0$ we have the critical disk $D_{\lambda,m_\lambda}$.

 We begin with conditions for multiple interpolation. 
  
 \begin{theorem}\label{thm:intL2}
  (a) If $X=\{(\Lambda_k,m^{(k)})\}_{k\in\N}$ is an $L^2$-interpolation array with interpolation constant $M_X$, then there exist $c=c(M_X)>0$ and $k_0\in\N$ such that for $k\geq k_0$ the contracted disks 
  \[
  \Bigl\{D\bigl(\lambda,\frac{\sqrt m_\lambda -c}{\sqrt k}\bigr)\Bigr\}_{\lambda\in\Lambda_k : \sqrt{m_\lambda}>c}
  \]
  are pairwise disjoint.
  
  (b) If there exist $c>0$ and $k_0\in\N$ such that for $k\geq k_0$ the dilated disks
   \[
  \Bigl\{D\bigl(\lambda,\frac{\sqrt m_\lambda +c}{\sqrt k}\bigr)\Bigr\}_{\lambda\in\Lambda_k}
  \]
  are pairwise disjoint, then $X$ is an $L^2$-interpolating array for $\P_k$.
 \end{theorem}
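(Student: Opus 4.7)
\emph{Overall framework.}
My plan is to cast both directions via the Gram matrix $G_k$ of the family $\{T_\lambda e_{k,j}\}_{\lambda\in\Lambda_k,\,j<m_\lambda}$. Because $\{T_\lambda e_{k,j}\}_{j=0}^k$ is orthonormal inside each $\lambda$-block (the image of an ONB under the isometry $T_\lambda$), the diagonal $m_\lambda\times m_\lambda$ blocks of $G_k$ equal the identity, so $G_k=I+E_k$ with $E_k$ supported off the block diagonal. $L^2$-interpolation with constant $M_X$ is equivalent to $G_k\geq(1/M_X^2)I$ uniformly in $k$, i.e.\ $\|E_k\|\leq 1-1/M_X^2$; each off-diagonal block satisfies $\|E_{\lambda\mu}\|=\cos\theta(V_\lambda,V_\mu)$, with $V_\lambda=T_\lambda\,\operatorname{span}\{e_{k,0},\ldots,e_{k,m_\lambda-1}\}$. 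The central identity, obtained by direct computation of the $T_\lambda e_{k,j}$, is
\[
\frac{K^{V_\lambda}(z,z)}{K(z,z)}=\sum_{j<m_\lambda}\binom{k}{j}d(\lambda,z)^{2j}\bigl(1-d(\lambda,z)^2\bigr)^{k-j}=\Pr(Y<m_\lambda),\qquad Y\sim\mathrm{Bin}(k,d(\lambda,z)^2),
\]
and a Bernstein-type tail bound on $\Pr(Y<m_\lambda)$ is the single quantitative input used throughout.

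\emph{Part (a).}
Restricting $G_k$ to the two $\{\lambda,\mu\}$-blocks forces $\cos\theta(V_\lambda,V_\mu)\leq 1-1/M_X^2$ for every pair. Bernstein applied to the identity above shows that for $z\in D(\lambda,(\sqrt{m_\lambda}-c)/\sqrt k)$ the ratio $K^{V_\lambda}(z,z)/K(z,z)$ exceeds $1-e^{-\alpha c^2}$, so $\|P_{V_\lambda^\perp}(K_{z_0}/\|K_{z_0}\|)\|\leq e^{-\alpha c^2/2}$ at any point $z_0$ in the contracted disc. If $z_0$ lies in the intersection of both contracted discs, the normalized kernel $K_{z_0}/\|K_{z_0}\|$ sits within $L^2$-distance $e^{-\alpha c^2/2}$ of both $V_\lambda$ and $V_\mu$, and two triangle inequalities give $\cos\theta(V_\lambda,V_\mu)\geq 1-Ce^{-\alpha c^2/2}$. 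Combined with the interpolation bound this forces $c\leq c(M_X)$, i.e.\ the contracted discs must be disjoint for that value of $c$.

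\emph{Part (b).}
I would control $\|E_k\|$ through the block Schur bound $\|E_k\|\leq\sup_\lambda\sum_{\mu\neq\lambda}\cos\theta(V_\lambda,V_\mu)$. The dual use of the identity gives, for unit $v\in V_\lambda$, the pointwise decay $|v(z)|^2/(1+|z|^2)^k\leq\Pr(Y<m_\lambda)\leq e^{-\alpha c^2}$ whenever $z$ lies outside $D(\lambda,(\sqrt{m_\lambda}+c)/\sqrt k)$, together with the integrated identity $(k+1)\int_0^1\Pr(Y<m_\lambda)\,dp=m_\lambda$ and $\nu(D(\lambda,r))=r^2$ that controls the $L^2$-tail. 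Decomposing $\C$ into the two dilated discs (disjoint by hypothesis) and their complement, Cauchy--Schwarz on each piece produces $|\langle v,w\rangle_k|\leq Ce^{-\alpha c^2/2}$ for unit $v\in V_\lambda,\,w\in V_\mu$. The sum over $\mu$ is organized in chordal annuli around $\lambda$ using the Beurling-type packing forced by dilated-disc disjointness, giving $\sum_{\mu\neq\lambda}\cos\theta(V_\lambda,V_\mu)\leq Ce^{-\alpha c^2/2}<1$ once $c$ is large enough, whence $G_k$ is invertible with uniformly bounded inverse; the case of small $c$ in the hypothesis is reduced to the large-$c$ case by a standard sparsification of $\Lambda_k$ into a uniformly bounded number of subarrays. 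The main difficulty is uniformity in $k$: because $m_\lambda$ may grow with $k$, the $L^2$-tail prefactors $\sim\sqrt{m_\mu}$ coming from Cauchy--Schwarz have to be absorbed by the Gaussian decay from Bernstein (Hoeffding is insufficient here), and the summation must honestly exploit the packing in annular shells to avoid accumulating an unbounded number of pairs.
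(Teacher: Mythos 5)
Your part~(a) is correct and takes a route that is genuinely different in packaging from the paper's, though it rests on the same core estimate. You frame things via the Gram matrix $G_k=I+E_k$, extract the two--block bound $\|E_{\lambda\mu}\|=\cos\theta(V_\lambda,V_\mu)\le 1-1/M_X^2$, and then use the identity $K^{V_\lambda}(z,z)/K(z,z)=\sum_{j<m_\lambda}\binom{k}{j}d(\lambda,z)^{2j}(1-d(\lambda,z)^2)^{k-j}$ with a Bernstein tail bound to show that a common point of the two contracted discs would force $\cos\theta(V_\lambda,V_\mu)$ too close to $1$. The paper instead directly interpolates: it puts $w_k$ with $D(w_k,1/\sqrt k)$ inside both contracted discs, solves an interpolation problem sending $T_{w_k}1$ to zero at $\lambda$ and to itself at $\lambda'$ with norm $\le M_X$, and uses the local $L^2$ estimate (Lemma~\ref{lemma:local-L-inf}) on each summand to contradict the lower bound $(k+1)\int_{D(w_k,1/\sqrt k)}|T_{w_k}1|^2/(1+|z|^2)^k\,d\nu\to 1-e^{-1}$. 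Your binomial tail bound is exactly the incomplete beta estimate (Lemma~\ref{lemma:incomplete-beta}) in disguise, so the quantitative engine is the same; the angle formulation is a clean repackaging. Note one small inaccuracy in your framework: $G_k\ge M_X^{-2}I$ is equivalent to interpolation, but $\|E_k\|\le 1-M_X^{-2}$ is only a sufficient (not equivalent) reformulation, since a lower bound on the spectrum of $E_k$ says nothing about its upper edge; this does not affect part~(a), where you only use the two--block restriction, nor the sufficient direction in part~(b).

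Part~(b), however, has a genuine gap that your own final caveat already identifies and does not resolve. The Schur/off-diagonal-decay strategy needs $\sup_\lambda\sum_{\mu\neq\lambda}\|E_{\lambda\mu}\|<1$, but each off-diagonal block carries a prefactor of order $\sqrt{m_\lambda}+\sqrt{m_\mu}$ from the Cauchy--Schwarz step (since $(k+1)\nu(D'_\mu)\sim m_\mu$), while the Gaussian gain from disjointness of the dilated discs is only $e^{-\alpha c^2}$ \emph{uniformly in $\mu$} (disjointness bounds the gap below by $2c$ but does not prevent two neighboring discs with $m_\mu\to\infty$ from nearly touching). A single neighbor $\mu$ with $\delta_\mu\approx 2c$ and $m_\mu\to\infty$ already makes the individual term $\sqrt{m_\mu}\,e^{-\alpha c^2}$ unbounded, so no fixed $c$ and no ``sparsification into a bounded number of subarrays'' can rescue the estimate as $k\to\infty$. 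The annular-shell packing does not help here either, since the obstruction occurs already at the nearest shell. This is precisely the phenomenon (unbounded multiplicities spoiling Gram-matrix off-diagonal summation) that leads the paper, following \cite{BOC}, to abandon the Gram-matrix approach for the sufficiency direction and instead solve a $\bar\partial$-equation with H\"ormander $L^2$-estimates against the weight $\phi_k=(k+2)\log(1+|z|^2)+v_k$, where $v_k$ has logarithmic singularities of order $2m_\lambda$ at each $\lambda$. That singular weight simultaneously forces the correction $u_k$ to vanish to order $m_\lambda$ at every node and produces the required uniform norm bound, with no off-diagonal summation at all. To salvage your plan you would either need a multiplicity-free reduction (which is exactly what the $\bar\partial$-correction accomplishes implicitly) or a sharper bound on $\|E_{\lambda\mu}\|$ with no polynomial prefactor in $m_\mu$, which the pointwise Bernstein estimate alone does not give.
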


 The next result gives necessary conditions for sampling. 
 
 \begin{theorem}\label{th:sampling-L2}
 Let $X=\{(\Lambda_k, m^{(k)})\}_{k\geq 1}$ be an $L^2$-sampling array for $\P_k$. Then:
 \begin{itemize}
  \item [(a)] $X$ satisfies the finite overlap condition,
  \item [(b)] There exist $c>0$ and $k_0\in\N$ such that for all $k\geq k_0$
  \[
   \bigcup_{\lambda\in\Lambda_k} D\bigl(\lambda,\frac{\sqrt m_\lambda +c}{\sqrt k}\bigr)=\C.
  \]
 \end{itemize}
 \end{theorem}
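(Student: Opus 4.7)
The plan is to apply the sampling inequality to the normalized reproducing kernels
\[
 k_z:=K_z/\|K_z\|_{k,2},\qquad z\in\C,
\]
which satisfy $\|k_z\|_{k,2}=1$. Using the definition of $T_\lambda$ together with \eqref{eq:one-plus} and the reproducing property, a direct calculation gives
\[
 |\langle k_z, T_\lambda e_{k,j}\rangle_k|^2
 =\binom{k}{j}\,d(\lambda,z)^{2j}\bigl(1-d(\lambda,z)^2\bigr)^{k-j},
\]
which is the probability mass function of the binomial law $B(k,d(\lambda,z)^2)$ at $j$. Summing in $j<m_\lambda$,
\[
 S_\lambda(z):=\sum_{j<m_\lambda}|\langle k_z, T_\lambda e_{k,j}\rangle_k|^2
 =\Pr\bigl(B(k,d(\lambda,z)^2)\le m_\lambda-1\bigr),
\]
so the $L^2$-sampling hypothesis translates into $1/C\le\sum_{\lambda\in\Lambda_k}S_\lambda(z)\le C$ uniformly in $z\in\C$ and $k\ge 1$. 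Both conclusions are now reduced to quantitative estimates for this family of binomial tails.

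For (a), fix $z\in D_{\lambda,m_\lambda}$, i.e.\ $k\,d(\lambda,z)^2<m_\lambda$. By \eqref{eq:m-to-zero} the parameter $t=d(\lambda,z)^2$ tends to zero, so $B(k,t)$ is well approximated by $\mathrm{Poisson}(kt)$; since $kt<m_\lambda$, the median of this law lies strictly below $m_\lambda$. A short check, combining the CLT when $m_\lambda$ is large with explicit computation when $m_\lambda$ stays bounded, produces a universal $c_0>0$ and some $k_0\in\N$ such that $S_\lambda(z)\ge c_0$ whenever $k\ge k_0$ and $z\in D_{\lambda,m_\lambda}$. The upper sampling bound then forces
\[
 \#\{\lambda\in\Lambda_k:z\in D_{\lambda,m_\lambda}\}\le C/c_0,
\]
which is the finite overlap condition.

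For (b) I argue by contradiction. Negating the conclusion produces integers $k_n\to\infty$ and points $z_n\in\C$ lying outside every enlarged disk $D\bigl(\lambda,(\sqrt{m_\lambda}+n)/\sqrt{k_n}\bigr)$, $\lambda\in\Lambda_{k_n}$, so that $k_n d(\lambda,z_n)^2\ge(\sqrt{m_\lambda}+n)^2$ for all such $\lambda$. Partition $\Lambda_{k_n}$ into the shells
\[
 A_j=\bigl\{\lambda\in\Lambda_{k_n}:n+j\le\sqrt{k_n}\,d(\lambda,z_n)-\sqrt{m_\lambda}<n+j+1\bigr\},\qquad j\ge 0.
\]
For $\lambda\in A_j$ one has $k_n d(\lambda,z_n)^2-m_\lambda\ge 2(n+j)\sqrt{m_\lambda}+(n+j)^2$, so Chernoff's inequality applied to $B(k_n,d(\lambda,z_n)^2)$ yields $S_\lambda(z_n)\le\exp(-\kappa(n+j)^2)$ on each shell, with an absolute constant $\kappa>0$. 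Meanwhile, the finite overlap from (a), combined with a volume estimate in the annulus around $z_n$ that contains the critical disks $D_{\lambda,m_\lambda}$ for $\lambda\in A_j$, bounds $|A_j|$ by a polynomial in $n+j$. Summing in $j$ produces $\sum_{\lambda}S_\lambda(z_n)\to 0$ as $n\to\infty$, contradicting the lower sampling bound $1/C$. The main technical obstacle is to make the Chernoff estimate genuinely uniform across the whole range of $m_\lambda$ and $k_n d(\lambda,z_n)^2$, covering both the near-boundary regime (Gaussian decay in $n+j$) and the far regime where $d(\lambda,z_n)$ is bounded away from zero, and then to turn the finite overlap of the critical disks into the clean counting bound for shells that $z_n$ itself never meets.
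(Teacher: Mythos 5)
Your identity
\[
\bigl|\langle k_z, T_\lambda e_{k,j}\rangle_k\bigr|^2=\binom{k}{j}\,d(\lambda,z)^{2j}\bigl(1-d(\lambda,z)^2\bigr)^{k-j}
\]
is exactly what the paper computes in the proof of Proposition~\ref{prop:overlap_sampling} (written there in terms of $|\varphi_\lambda(z)|^2$ rather than $d(\lambda,z)^2$, but identical after \eqref{eq:one-plus}). The probabilistic reading --- $S_\lambda(z)$ is a binomial lower-tail probability --- is a genuinely useful way to think about the quantities that appear, and it organizes both halves of the argument.

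For (a) you are following the same route as the paper: reduce the finite-overlap question to a uniform lower bound on $\Pr\bigl(B(k,d(\lambda,z)^2)\le m_\lambda-1\bigr)$ for $z\in D_{\lambda,m_\lambda}$. The paper makes this rigorous as Lemma~\ref{lemma:B} via an elementary monotonicity argument; your ``CLT for large $m_\lambda$, explicit check for bounded $m_\lambda$'' is a sound sketch of the same fact, and the uniformity over the whole range of $(k,m_\lambda,d)$ under \eqref{eq:m-to-zero} is precisely the content of that lemma.

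For (b) you take a genuinely different route. The paper also extracts a bad point $w_n$ at rescaled distance $\ge c_n$ from every critical disk, but then it invokes the local $L^2$ estimate of Lemma~\ref{lemma:local-L2} to replace $S_\lambda(w_n)$ by $(k_n+1)\int_{D_{\lambda,m_\lambda}}\bigl[1-d^2(w_n,\zeta)\bigr]^{k_n}\,d\nu(\zeta)$. Summing over $\lambda$, finite overlap caps the sum by $S_X$ times the single integral of $(k_n+1)[1-d^2(w_n,\cdot)]^{k_n}$ over the complement of $D(w_n,c_n/\sqrt{k_n})$, which equals $S_X\bigl(1-c_n^2/k_n\bigr)^{k_n+1}\to 0$. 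No decomposition of $\Lambda_{k_n}$ and no tail inequality for the binomial are needed; the exponential smallness is read off from the closed form of the reproducing-kernel integral. Your shell-plus-Chernoff scheme is viable but requires two supporting facts you only gesture at: first, the Chernoff exponent $\frac{(k_nd^2-m_\lambda)^2}{2k_nd^2}$ is indeed $\gtrsim (n+j)^2$ uniformly (this works out by splitting on whether $m_\lambda\lessgtr k_nd^2-m_\lambda$); second, the bound $|A_j|\lesssim S_X(n+j)$ is not immediate from finite overlap alone, since the disks $D_{\lambda,m_\lambda}$ with $\lambda\in A_j$ can be much larger than the shell width $1/\sqrt{k_n}$. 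One must argue that each such disk meets the thin annulus at distance $[\tfrac{n+j}{\sqrt{k_n}},\tfrac{n+j+2}{\sqrt{k_n}}]$ from $z_n$ in a set of measure $\gtrsim 1/k_n$ (this is a lens-area estimate, valid because the disk radius is $\ge 1/\sqrt{k_n}$), and then apply overlap to the annulus. With those details filled in your argument closes, but the paper's integral-plus-overlap shortcut is appreciably simpler, and it is worth noticing that the local $L^2$ estimates --- which you do not use --- are doing the work of your Chernoff bound here.
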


 The sufficient condition for sampling is expressed in similar terms.
 
 \begin{theorem}\label{thm:L2-sampling-suff}
 Let $X$ be a divisor satisfying the finite overlap condition. For $c>0$ and $k\in\N$ let the union of contracted discs
 \[
 \mathcal U_k(-c)=\bigcup_{
 \lilk : \sqrt {m_\lambda}>c} D\bigl(\lambda,\frac{\sqrt m_\lambda -c}{\sqrt k}\bigr).
 \]
 There exists $c=c(S_X)>0$ such that if
 \[
 \lim_{k\to\infty} k\, \nu\bigl(\C\setminus \mathcal U_k(-c)\bigr)=0,
 \]
 then $X$ is $L^2$-sampling array.
\end{theorem}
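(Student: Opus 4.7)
The plan is to establish the lower sampling inequality $\|p\|^2_{k,2}\leq C\sum_\lambda\|p\|^2_{\P_k/N_{\lambda,m_\lambda}}$ by covering $\C$ with the contracted discs $D_\lambda(-c):=D(\lambda,(\sqrt{m_\lambda}-c)/\sqrt k)$ and bounding each local $L^2$-integral by the quotient norm at $\lambda$, up to a small remainder that can be absorbed for $c=c(S_X)$ large enough. The complementary upper bound $\sum_\lambda\|p\|^2_{\P_k/N_{\lambda,m_\lambda}}\leq C\|p\|^2_{k,2}$ is a Bessel-type inequality that follows directly from finite overlap.

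Setting $s(z):=p(z)/(1+|z|^2)^{k/2}$, the pointwise bound $|s(z)|^2\leq\|p\|^2_{k,2}$ from the reproducing kernel $K(z,z)=(1+|z|^2)^k$, together with the hypothesis $k\,\nu(\C\setminus\mathcal U_k(-c))\to 0$, gives
\[
(k+1)\int_{\C\setminus\mathcal U_k(-c)}|s|^2\,d\nu\leq (k+1)\nu(\C\setminus\mathcal U_k(-c))\,\|p\|^2_{k,2}=o(1)\|p\|^2_{k,2},
\]
so it remains to control $(k+1)\int_{\mathcal U_k(-c)}|s|^2\,d\nu$ by the sum of quotient norms.

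For the local analysis on each disc I use the isometry $T_\lambda$ to reduce to $\lambda=0$. The rotational symmetry of $\nu$ keeps the monomials orthogonal in $L^2(D(0,r),\nu)$, so writing $\tilde p=T_\lambda^{-1}p=\sum_j\tilde a_j w^j$ one gets
\[
(k+1)\int_{D_\lambda(-c)}|s|^2\,d\nu=\sum_{j=0}^k |\langle p,T_\lambda e_{k,j}\rangle|^2\,Q(j,r_\lambda),
\]
where $r_\lambda=(\sqrt{m_\lambda}-c)/\sqrt k$ and $Q(j,r_\lambda):=(k+1)\binom{k}{j}\int_{D(0,r_\lambda)}|w|^{2j}(1+|w|^2)^{-k}d\nu(w)\in[0,1]$, $Q(j,\infty)=1$. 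Under the scaling $y=k|w|^2$, the density $(k+1)\binom{k}{j}t^j(1+t)^{-k-2}$ converges to a Gamma$(j+1,1)$ density, and a Chernoff/Cram\'er estimate yields $Q(j,r_\lambda)\leq e^{-\beta c^2}$ for $j\geq m_\lambda$ with a universal $\beta>0$, with the sharper decay $Q(j,r_\lambda)\leq\exp(-\beta(c^2+(j-m_\lambda)^2/m_\lambda))$ for moderate $j-m_\lambda$. The contribution from $j<m_\lambda$, summed over $\lambda$ using $Q\leq 1$, is exactly $\sum_\lambda\|p\|^2_{\P_k/N_{\lambda,m_\lambda}}$, the target quantity.

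The main obstacle is the tail $\mathcal T:=\sum_\lambda\sum_{j\ge m_\lambda}|\langle p,T_\lambda e_{k,j}\rangle|^2 Q(j,r_\lambda)$: a naive bound $Q\leq e^{-\beta c^2}$ together with $|\langle p,T_\lambda e_{k,j}\rangle|^2\leq\|p\|^2_{k,2}$ loses a factor of $|\Lambda_k|$, forcing $c$ to grow with $k$. I would circumvent this by exploiting the geometric concentration of $|T_\lambda e_{k,j}|^2/(1+|z|^2)^k$ on the chordal annulus of radius $\sqrt{j/k}$ about $\lambda$: for $j-m_\lambda$ in the effective window of size $O(\sqrt{m_\lambda})$ these annuli lie just outside the critical discs $D_{\lambda,m_\lambda}$, and inflating the latter by a bounded factor preserves finite overlap with a constant $C(S_X)$. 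This yields a Bessel-type inequality
\[
\sum_\lambda\sum_{j\ge m_\lambda} e^{-\beta(j-m_\lambda)^2/m_\lambda}|\langle p,T_\lambda e_{k,j}\rangle|^2\leq C(S_X)\|p\|^2_{k,2},
\]
and combining with the $e^{-\beta c^2}$ factor gives $\mathcal T\leq C(S_X)e^{-\beta c^2}\|p\|^2_{k,2}$. Choosing $c=c(S_X)$ with $C(S_X)e^{-\beta c^2}<1/4$ absorbs the tail and yields $\|p\|^2_{k,2}\leq 4\sum_\lambda\|p\|^2_{\P_k/N_{\lambda,m_\lambda}}$ for $k$ large; the remaining finitely many $k$ are handled by the finite dimensionality of $\P_k$.
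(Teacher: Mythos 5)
Your approach is genuinely different from the paper's, and the paper's route is substantially simpler. The paper argues by contradiction: if the sampling inequality fails, there is a sequence of normalized polynomials $p_{k(\epsilon)}$ with $\|p_{k(\epsilon)}\|_{k(\epsilon),2}=1$ whose sampled values have $\ell^2$-norm at most $\epsilon$. Lemma~\ref{lemma:local-L-inf} then yields, for each $\lambda\in\Lambda_k$, that the local $L^2$-integral over the contracted disc $D(\lambda,(\sqrt{m_\lambda}-a)/\sqrt k)$ is at most $2\epsilon$ times the local integral over $D_{\lambda,m_\lambda}$; the normalization $\|p\|=1$ plays the role of hypothesis (ii) in that lemma, so no global control of tail coefficients is needed. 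Summing with the finite overlap constant and using the pointwise bound on $\C\setminus\mathcal U_k(-a)$ gives $1\leq 2\epsilon S_X + (k+1)\nu(\C\setminus\mathcal U_k(-a))$, a contradiction as $k\to\infty$ once $\epsilon<1/(2S_X)$. Your direct decomposition $\|p\|^2 = \sum_\lambda\sum_j Q(j,r_\lambda)|\langle p,T_\lambda e_{k,j}\rangle|^2 + o(1)\|p\|^2$ is conceptually attractive, and the part up to the tail $\mathcal T$ is sound, but the Bessel-type inequality
$\sum_\lambda\sum_{j\geq m_\lambda} e^{-\beta(j-m_\lambda)^2/m_\lambda}|\langle p,T_\lambda e_{k,j}\rangle|^2\leq C(S_X)\|p\|^2_{k,2}$
is the entire content of the tail estimate, and you assert it rather than prove it.

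Two specific issues with that assertion. First, the claim that ``inflating the critical discs by a bounded multiplicative factor preserves finite overlap'' is not automatic when radii vary widely: for a family of intervals (or discs) with radii spanning many dyadic scales, a bounded multiplicative inflation can increase the overlap constant unboundedly. What saves you here is that the enlargement from $D_{\lambda,m_\lambda}$ to the disc of radius $\sqrt{(m_\lambda+A\sqrt{m_\lambda})/k}$ is an \emph{additive} inflation by $O(A/\sqrt k)$, together with the lower bound $r_\lambda\geq 1/\sqrt k$; with those two facts one can indeed prove a finite-overlap lemma for the enlarged family (put a disc of radius $\sim 1/\sqrt k$ inside each $D_{\lambda,m_\lambda}\cap D(z,(C+1)/\sqrt k)$ and count by measure), but this is a genuine lemma that your sketch skips. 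Second, your claimed sharp decay $Q(j,r_\lambda)\leq\exp(-\beta(c^2+(j-m_\lambda)^2/m_\lambda))$ is only valid for $j-m_\lambda = O(\sqrt{m_\lambda})$; for $j\gg m_\lambda$ the Beta$(j+1,k-j+1)$ variance grows with $j$, so the correct exponent is $\sim (j-m_\lambda)^2/j$, not $/m_\lambda$. This doesn't break the argument (the weight is still exponentially small), but it means the weight you put into the Bessel inequality must be the true one, and the subsequent concentration-plus-overlap argument needs to be run carefully. In short, the plan is plausible but the heart of it — the Bessel estimate — is left as a sketch whose ingredients (overlap of inflated discs, precise Chernoff bound, concentration of $T_\lambda e_{k,j}$ on chordal annuli) all need to be proved. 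The paper's contradiction scheme together with Lemma~\ref{lemma:local-L-inf} bypasses all of this.
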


\begin{remarks*} 1. Our study is motivated by the analogous results for the Fock space of entire functions
\[
 \mathcal F=\bigl\{f\in H(\C) : \|f\|_{\mathcal F}^2=\frac 1{\pi}\int_{\C} |f(z)|^2 e^{-|z|^2}dm(z)<\infty\bigr\},
\]
obtained in \cite{BHKM}. The theorems above are in accordance with the well-known fact that any $f\in\mathcal F$ can be seen as the limit of a sequence of rescaled polynomials $p_k(z/\sqrt k)$, $k\in\N$, with $p_k\in\P_k$ (see e.g. \cite{Gr-OC}). Actually, Theorem 1.1 in \cite{BHKM} can be seen as the limit case of Theorems~\ref{th:sampling-L2} and \ref{thm:L2-sampling-suff}, whereas Theorem 1.3 in \cite{BHKM} is the limit case of Theorem~\ref{thm:intL2}.

Sampling and interpolating sequences with unbounded multiplicities have also been studied for the Bergman spaces in the unit disk (see \cite{ACHK}).

2. It would be interesting to study interpolating and sampling arrays on general compact complex manifolds $X$, endowed with a smooth hermitian metric $\omega$ inducing a distance $d$. Starting from a holomorphic line bundle $L$ equipped with a smooth hermitian metric $\phi$, the role of the spaces $\P_k$ would be played by the $k$-th tensor power $L^k$, with metric $k\phi$ (see \cite{LOC} for similar problems).

\end{remarks*}

The paper is organized as follows. In the first section, we set up the elementary technical results necessary for the proofs. Section~\ref{sec:local} deals with local estimates of polynomials $p\in\P_k$. These are crucial in the proofs of the theorems stated in this introduction, and rely heavily on careful estimates of the incomplete beta functions. The proof of these estimates is deferred to an Annex, at the end of the paper. Section~\ref{sec:L2-sampling} is devoted to prove the sampling Theorems~\ref{th:sampling-L2} and \ref{thm:L2-sampling-suff}. Finally, Section~\ref{sec:int} contains the proof of Theorem~\ref{thm:intL2}, which is an adaptation of the ideas in \cite{BOC}.

 \section{Preliminaries}
 
Recall that $\varphi_\lambda$ is the rotation of the sphere given by \eqref{eq:automorphism}. It will be convenient to use the disks
 \[
  \Delta(z,r)=\bigl\{w\in\C : |\varphi_z(w)|<r\bigr\},\quad z\in\C, \ r>0.
 \]
 Using \eqref{eq:one-plus} one sees that for $z,\lambda\in\C$,
 \[
  d^2(z,\lambda)=\frac{|\varphi_\lambda(z)|^2}{1+|\varphi_\lambda(z)|^2}.
 \]
 Therefore $\Delta(z,r)=D(z,\frac r{\sqrt{1+r^2}})$ and, in particular,
 \[
  D_{\lambda,m_\lambda}=\Delta(\lambda,r_{\lambda, m_\lambda})\ ,\quad \text{with} \quad r_{\lambda, m_\lambda}=\sqrt{\frac{m_\lambda/k}{1-m_\lambda/k}} =\sqrt{\frac{m_\lambda}{k-m_\lambda}}.
 \]

 We shall also consider the so-called invariant Laplacian. For $\psi\in\mathcal C^2$ denote 
 \[
 \Delta\psi=\frac i{2\pi}\partial\bar\partial\psi=\frac{\partial^2 \psi}{\partial z\partial \bar z}\frac{dm(z)}{\pi}, 
 \]
 which differs from the usual definition by a constant factor. As observed in the first paragraph,
 \[
  \Delta\phi=\nu.
 \]
Define 
 \[
  \widetilde \Delta =(1+|z|^2)^2\Delta.
 \]
 This operator is invariant with respect to the transformations $\varphi_\lambda$, $\lambda\in\C$, in the sense that
 \[
  \widetilde \Delta ( u\circ\varphi_\lambda )=(\widetilde  \Delta u )\circ \varphi_\lambda.
 \]
 In particular
 \begin{equation}\label{inv-lap}
\widetilde \Delta\log (1+|\varphi_\lambda(z)|^2)= \widetilde \Delta\log (1+|z|^2)=\frac{dm(z)}{\pi}.
 \end{equation}
 
 For future use we state here an elementary computation. 

 \begin{lemma}\label{lemma:computations}
  Let $\lambda\in\C$ and $R>0$. Then
\[
 \int_{D(\lambda,R)} \Delta\log(1+|\varphi_\lambda(z)|^2)=\nu\bigl(D(\lambda,R)\bigr)=   R^2.
\]
 \end{lemma}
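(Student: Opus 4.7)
The plan is to prove both equalities separately, obtaining the middle quantity $\nu(D(\lambda,R))$ as the hinge.

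\textbf{Step 1: The integral equals the $\nu$-measure.} This follows immediately from the invariance relation \eqref{inv-lap}. Indeed, since $\widetilde\Delta = (1+|z|^2)^2\Delta$ by definition, dividing $\widetilde\Delta\log(1+|\varphi_\lambda(z)|^2) = dm(z)/\pi$ by $(1+|z|^2)^2$ yields
\[
\Delta\log(1+|\varphi_\lambda(z)|^2)=\frac{dm(z)}{\pi(1+|z|^2)^2}=d\nu(z).
\]
Integrating over $D(\lambda,R)$ gives $\nu(D(\lambda,R))$.

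\textbf{Step 2: Compute $\nu(D(\lambda,R))$.} Since $\varphi_\lambda$ is a rotation of the sphere and the chordal distance is the Euclidean distance restricted to $\mathbb S^2$, $\varphi_\lambda$ maps $D(\lambda,R)$ bijectively onto $D(0,R)$, and $\nu$ is invariant under this transformation; hence
\[
\nu\bigl(D(\lambda,R)\bigr)=\nu\bigl(D(0,R)\bigr).
\]
From the identity $d^2(z,0)=|z|^2/(1+|z|^2)$ recorded just before the statement, the condition $d(z,0)<R$ (necessarily $R\in(0,1)$ for a nontrivial disk) is equivalent to $|z|<R/\sqrt{1-R^2}$. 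Passing to polar coordinates,
\[
\nu\bigl(D(0,R)\bigr)=\int_0^{R/\sqrt{1-R^2}} \frac{2\rho\,d\rho}{(1+\rho^2)^2}
=\Bigl[-\frac{1}{1+\rho^2}\Bigr]_0^{R/\sqrt{1-R^2}}=1-(1-R^2)=R^2.
\]

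\textbf{Expected difficulties.} There is essentially no obstacle: both steps are one-line computations once one notices that \eqref{inv-lap} is exactly tailored to convert the Laplacian integrand into the measure $\nu$, and that rotational invariance reduces the volume computation to a radial integral centered at the origin. The only minor point to be attentive to is that the formula makes sense only for $R<1$ (otherwise $D(\lambda,R)=\C$ and the right-hand side $R^2$ is not the total mass of $\nu$, which is $1$); in practice the lemma is applied with $R=\sqrt{m_\lambda/k}$ under the standing assumption \eqref{eq:m-to-zero}, so this restriction is harmless.
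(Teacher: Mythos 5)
Your proof is correct and follows essentially the same route as the paper's: convert the Laplacian integrand to $d\nu$ via \eqref{inv-lap}, reduce to a disc centered at $0$ by rotational invariance, and evaluate the radial integral. The only cosmetic difference is that the paper performs the polar-coordinate computation for the disc $\Delta(0,r)$ (obtaining $r^2/(1+r^2)$) and then implicitly uses $\Delta(0,r)=D(0,r/\sqrt{1+r^2})$, whereas you integrate directly over $D(0,R)$ up to radius $R/\sqrt{1-R^2}$ and land on $R^2$ at once; these are the same change of variables. Your closing remark that the identity is only meaningful for $R<1$ is a fair and accurate caveat.
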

 
Observe that this can be rephrased as
 \[
  \int_{\Delta(\lambda,r)} \Delta\log(1+|\varphi_\lambda(z)|^2)=\nu\bigl(\Delta(\lambda,r)\bigr)=  \frac { r^2}{1+r^2}, \quad r>0.
 \]
 
A particular case to be used throughout the paper is
 \[
  \nu(D_{\lambda,m_\lambda})=\nu\bigl(\Delta(\lambda, r_{\lambda,m_\lambda})\bigr)=\frac{m_\lambda}k.
 \]

\begin{proof}
 Using the invariance of $\nu$,
  \begin{align*}
   \int_{\Delta(\lambda,r)}  \Delta\log(1+|\varphi_\lambda(z)|^2)&= \int_{\Delta(\lambda,r)}  \widetilde\Delta\log(1+|\varphi_\lambda(z)|^2)\, \frac 1{(1+|z|^2)^2}=\nu \bigl(  \Delta(0,r)\bigr).
  \end{align*}
This is easily computed with a change to polar coordinates:
\begin{align*}
 \nu \bigl(  \Delta(0,r)\bigr)&=2\pi\int_0^r\frac{\rho\, d\rho}{\pi(1+\rho^2)^2}= \int_0^{r^2} \frac {dt}{(1+t)^2} = \Bigl[\frac 1{1+t}\Bigr]_0^{r^2}=\frac {r^2}{1+r^2} .
\end{align*}
\end{proof}

 \section{Local $L^2$ and $L^\infty$-estimates}\label{sec:local}
 
 We want to prove, quantitatively,  that the values of $p\in\P_k$ and its derivatives up to order $m$ at $\lambda$ are controlled by the local $L^2$ norm of $p$ in a disc of center $\lambda$ and radius of order $\sqrt{m/k}$. This requires precise estimates on the incomplete beta function.
 
 \subsection{Beta functions}\label{subsec:beta}
Given $a,b>1$ consider the usual beta functions of parameters $a$ and $b$: 
\[
 \beta(a,b)=\int_0^1 t^{a-1} (1-t)^{b-1} dt=\frac{\Gamma(a)\Gamma(b)}{\Gamma(a+b)}.
\]
The normalized incomplete beta functions appear naturally in our study. For $x\in (0,1)$ let
\[
 I(a,b; x)=\frac 1{\beta(a,b)}\int_0^x t^{a-1} (1-t)^{b-1} dt.
\]

For $R\in (0,1)$ and $p\in\P_k$ define the scalar product restricted to the disk $D(0,R)$,
 \begin{equation}\label{eq:scalar_R}
   \langle p,q\rangle_{k,R}=(k+1)\int_{D(0,R)} p(z)\, \overline{q(z)}\, \frac{d\nu(z)}{(1+|z|^2)^k},\quad p,q\in\P_k
\end{equation}
and its corresponding norm $\|p\|_{k,D(0,R)}=\bigl(\langle p,p\rangle_{k,R}\bigr)^{1/2}$.

It is clear that the system $e_{k,j}(z)=\binom{k}{j}\, z^j$, $j\in \{0,1,\dots,k\}$ is also orthogonal when restricted to $D(0,R)$. A bit more can be said.

 \begin{lemma}\label{lemma:beta-truncated} Let $\{e_{k,j}\}_{j=0}^k$ and $R>0$ as above. Then, for all $j\in \{0,1,\dots,k\}$,
  \[
   \|e_{k,j}\|_{k,D(0,R)}^2= I \bigl(j+1,k-j+1; R^2\bigr).
  \]
 \end{lemma}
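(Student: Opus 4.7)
The plan is to unfold the definition of the norm, reduce the area integral over a chordal disc to a one-dimensional radial integral in polar coordinates, and then substitute so as to arrive at the integrand defining the incomplete beta function.

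First I would write out the definition, using $d\nu(z)=dm(z)/\pi(1+|z|^2)^2$ and $|e_{k,j}(z)|^2=\binom{k}{j}|z|^{2j}$, to get
\[
 \|e_{k,j}\|_{k,D(0,R)}^2 = (k+1)\binom{k}{j} \int_{D(0,R)} \frac{|z|^{2j}}{(1+|z|^2)^{k+2}}\,\frac{dm(z)}{\pi}.
\]
Next I would identify the chordal disc. Since $d(z,0)^2=|z|^2/(1+|z|^2)$, the disc $D(0,R)$ is the Euclidean disc $\{|z|<R/\sqrt{1-R^2}\}$. Passing to polar coordinates $z=\rho e^{i\theta}$ and performing the trivial angular integration yields
\[
 \|e_{k,j}\|_{k,D(0,R)}^2 = 2(k+1)\binom{k}{j}\int_0^{R/\sqrt{1-R^2}}\frac{\rho^{2j+1}}{(1+\rho^2)^{k+2}}\,d\rho.
\]

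Then I would make the change of variables $t=\rho^2/(1+\rho^2)$, which sends the upper endpoint $\rho=R/\sqrt{1-R^2}$ to $t=R^2$ and satisfies $\rho\,d\rho=\tfrac12(1-t)^{-2}\,dt$, $\rho^{2j}=t^j(1-t)^{-j}$ and $(1+\rho^2)^{-(k+2)}=(1-t)^{k+2}$. A direct simplification collapses the factors to $\tfrac12 t^j(1-t)^{k-j}\,dt$, so
\[
 \|e_{k,j}\|_{k,D(0,R)}^2 = (k+1)\binom{k}{j}\int_0^{R^2} t^j(1-t)^{k-j}\,dt.
\]
Finally I would use $\beta(j+1,k-j+1)=j!(k-j)!/(k+1)!$, which gives exactly $(k+1)\binom{k}{j}=1/\beta(j+1,k-j+1)$, and the right-hand side is then the incomplete beta $I(j+1,k-j+1;R^2)$ by definition.

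The proof is a pure computation, so there is no genuine obstacle; the only point requiring a little care is translating the chordal disc into a Euclidean one of the correct radius so that the substitution $t=\rho^2/(1+\rho^2)$ hits $R^2$ precisely. As a consistency check, taking $R\uparrow 1$ makes the integration domain the whole plane and the identity reduces to $\|e_{k,j}\|_k^2=I(j+1,k-j+1;1)=1$, which agrees with the fact that $\{e_{k,j}\}$ is orthonormal in $\mathcal P_k$.
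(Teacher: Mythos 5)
Your proof is correct and follows essentially the same route as the paper's: unfold the definition, pass to polar coordinates, and change variables to land on the incomplete beta integrand. The only cosmetic difference is that you compress the paper's chain of substitutions ($t=r^2$, then $s=1/(1+t)$, then $x=1-s$) into the single substitution $t=\rho^2/(1+\rho^2)$.
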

 
 \begin{proof}
  By definition, since $z\in D(0,R)$ is equivalent to $|z|\leq\sqrt{R^2/(1-R^2)}$,
  \begin{align*}
   \|e_{k,j}\|_{k,D(0,R)}^2&=(k+1)\int\limits_{|z|\leq \sqrt{\frac{R^2}{1-R^2}}}
   \binom{k}{j} \frac{|z|^{2j}}{(1+|z|^2)^k}\,\frac{dm(z)}{\pi(1+|z|^2)^2}\\
  &=(k+1)\binom{k}{j} \int_0^{\sqrt{\frac{R^2}{1-R^2}}}\frac{r^{2j}}{(1+r^2)^{k+2}}\, 2r\, dr\\
  &=(k+1)\binom{k}{j} \int_0^{\frac{R^2}{1-R^2}}\frac{t^{j}}{(1+t)^{k+2}}\, dt.
  \end{align*}
On the one hand,
\[
 \beta(j+1,k-j+1)=\frac{j! (k-j)!}{(k+1)!}=\frac 1{(k+1) \binom{k}{j}}
\]
On the other hand, by the substitution $s=1/(1+t)$ in the integral,
\begin{align*}
 \int_0^{\frac{R^2}{1-R^2}}\frac{t^{j}}{(1+t)^{k+2}}\, dt&=
     \int_{1-R^2}^1 s^{k-j}(1-s)^j\, ds
   = \int_0^{R^2} x^j\, (1-x)^{k-j}\,  dx,
\end{align*}
as desired
 \end{proof}
 
 The following estimates for the incomplete beta function will be crucial. The proof is technical, and it is deferred to the Annex.

  \begin{lemma}\label{lemma:incomplete-beta} 
 For each $k\in\N$ let $m_k\in\N$ be such that $\lim_k m_k/k=0$.
  \begin{itemize}
   \item [(a)] For $a>0$ there exist $\epsilon(a)>0$ and $k_0,m_0\in\N$ such that for $k\geq k_0$ and $m\in[ m_0, m_k]$
   \[
    I\Bigl(j+1,k-j+1; \frac {m-a\sqrt m}k \Bigr)\geq \epsilon(a),\quad j=0,\dots,m-1.
   \]
   \item[(b)] Given $\epsilon>0$ there exist $a>0$, $k_0\in\N$, $m_0=m_0(a)>0$ such that for $k\geq k_0$ and $m\in[ m_0, m_k]$
\[
 I\Bigl(j+1,k-j+1; \frac {m-a\sqrt m}k \Bigr)\leq \epsilon\, I\Bigl(j+1,k-j+1;\frac mk\Bigr),\quad \text{for}\ j=m,\dots,k.
\]
  \end{itemize}
 \end{lemma}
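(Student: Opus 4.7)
The two parts are handled by distinct methods that both start from familiar identities for the incomplete beta function. Interpreting $I(j+1,k-j+1;x)$ either as $P(S_x\geq j+1)$ with $S_x\sim\mathrm{Bin}(k+1,x)$, or via the integral $\beta(j+1,k-j+1)^{-1}\int_0^x t^j(1-t)^{k-j}\,dt$, part (a) becomes an anti-concentration problem for the binomial and part (b) becomes a sharp integral comparison.

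For part (a), note that for fixed $x$ the probability $P(S_x\geq j+1)$ is non-increasing in $j$, so it suffices to establish the bound at $j=m-1$, i.e.\ to show $P(S_{x_1}\geq m)\geq\epsilon(a)$, where $x_1=(m-a\sqrt m)/k$. One checks that $\mu:=(k+1)x_1$ satisfies $m-\mu=a\sqrt m+O(m/k)$ and $\sigma^2:=(k+1)x_1(1-x_1)=m(1+o(1))$ under the hypothesis $m_k/k\to 0$, so the normalized gap $a^*:=(m-\mu)/\sigma$ tends to $a$. The Berry--Esseen bound for Bernoulli sums gives $|P(S_{x_1}\geq m)-P(Z\geq a^*)|=O(1/\sigma)=O(1/\sqrt m)$. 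Combining these, $P(S_{x_1}\geq m)\to P(Z\geq a)>0$ uniformly over the admissible range, so taking $k_0$ and $m_0$ large one sets $\epsilon(a):=P(Z\geq a)/2$.

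For part (b), write $R(j):=\int_0^{x_1} g\,dt\,/\int_0^{x_2} g\,dt$ with $g(t):=t^j(1-t)^{k-j}$, $x_1=(m-a\sqrt m)/k$ and $x_2=m/k$. For $j\geq m$ the peak $j/k$ of $g$ lies to the right of $x_2$, so $g$ is increasing on $[0,x_2]$. Since $\log g$ is concave on $(0,1)$, the tangent-line inequality at $x_1$ gives $g(t)\leq g(x_1)e^{\alpha(t-x_1)}$ for all $t\in(0,1)$, with $\alpha=(j-kx_1)/[x_1(1-x_1)]$. Using $j-kx_1\geq a\sqrt m$ and $x_1(1-x_1)\leq x_1\leq m/k$ one obtains $\alpha\geq ak/\sqrt m$, hence $\int_0^{x_1} g\,dt\leq g(x_1)/\alpha\leq g(x_1)\sqrt m/(ak)$. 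Monotonicity of $g$ on $[x_1,x_2]$ yields $\int_{x_1}^{x_2} g\,dt\geq (x_2-x_1)g(x_1)=(a\sqrt m/k)g(x_1)$. Dividing the two estimates gives $R(j)\leq 1/a^2$ uniformly in $k$ and in $j\geq m$. Setting $a:=1/\sqrt\epsilon$ and $m_0:=\lceil a^2\rceil+1$ (so that $x_1>0$) finishes the proof.

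The main obstacle lies in part (a): one needs both the Berry--Esseen error $O(1/\sigma)$ and the shift $|a^*-a|$ to be controlled uniformly over $m\in[m_0,m_k]$ as $k\to\infty$. The critical auxiliary facts are $a\sqrt m/k\to 0$ and $\sqrt m/k\to 0$, which follow from the single hypothesis $m_k/k\to 0$ because $\sqrt{m_k}/k=\sqrt{(m_k/k)/k}\to 0$. Part (b) is by contrast essentially computational once the tangent-line bound on the log-concave integrand is in hand, and produces the clean constant $1/a^2$ with no dependence on $k$.
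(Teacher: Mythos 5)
Your proof is correct, and it takes a genuinely different route from the paper's. In part (a) you both reduce to $j=m-1$ via monotonicity of $I(j+1,k-j+1;x)$ in $j$, but then the paper stays entirely in the integral representation: after substituting $t=s/k$ it compares the mass of $s^{m-1}(1-s/k)^{k-m+1}$ over $[m-2a\sqrt m,\,m-a\sqrt m]$ to that over $[m-a\sqrt m,\,m]$ via the change of variables $s-a\sqrt s=u$ together with the pointwise bound of Lemma~\ref{lemma:estimates-a}(a), and closes with the trapezoidal estimate of Lemma~\ref{lemma:tip-beta}. You instead read $I(j+1,k-j+1;x)$ as the binomial tail $P(\mathrm{Bin}(k+1,x)\geq j+1)$ and apply Berry--Esseen; this is shorter, and your uniformity check (control of $a^*-a$ through $\sqrt m/k\to 0$ and of the Berry--Esseen error through $O(1/\sqrt m)$) is the right thing to verify and is argued correctly, but it imports a nontrivial probabilistic theorem where the paper's argument is self-contained and elementary. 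In part (b) the paper again works with the substituted integrals and the pointwise inequality of Lemma~\ref{lemma:estimates-a}(b), obtaining the factor $e^{-a^2/2}$ after the change $s-a\sqrt s=u$; your argument exploiting log-concavity of $t\mapsto t^j(1-t)^{k-j}$ --- the tangent-line bound at $x_1$ giving $\int_0^{x_1}g\leq g(x_1)/\alpha$ with $\alpha\geq ak/\sqrt m$, and monotonicity on $[x_1,x_2]$ giving $\int_{x_1}^{x_2}g\geq(a\sqrt m/k)g(x_1)$ --- yields the clean uniform constant $1/a^2$ with no change of variables and is arguably more transparent. Both proofs are valid; yours trades self-containedness for brevity in (a), and in (b) produces a somewhat slicker and more elementary estimate.
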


 
 \subsection{Local estimates}
 
 We start with a straightforward pointwise estimate.

 \begin{lemma}\label{lemma:pointwise}
  Let $p\in\P_k$. Then
  \[
  \|p\|_{k,\infty}:=\sup_{\lambda\in\C} \frac{|p(\lambda)|}{(1+|\lambda|^2)^{k/2}}\leq \|p\|_{k,2}\, .
  \]
 \end{lemma}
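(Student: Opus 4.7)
The plan is to apply the reproducing kernel structure of $\P_k$ directly. Recall from the preceding discussion that $\P_k$ is a reproducing kernel Hilbert space with kernel $K_w(z) = (1+z\bar w)^k$, and in particular
\[
  \|K_\lambda\|_{k,2}^2 = \langle K_\lambda, K_\lambda\rangle_k = K_\lambda(\lambda) = (1+|\lambda|^2)^k.
\]

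Given $p\in\P_k$ and $\lambda\in\C$, the reproducing property gives $p(\lambda) = \langle p, K_\lambda\rangle_k$. Cauchy--Schwarz then yields
\[
  |p(\lambda)| \;=\; |\langle p, K_\lambda\rangle_k| \;\leq\; \|p\|_{k,2}\,\|K_\lambda\|_{k,2} \;=\; \|p\|_{k,2}\,(1+|\lambda|^2)^{k/2}.
\]
Dividing by $(1+|\lambda|^2)^{k/2}$ and taking the supremum over $\lambda$ gives the claimed estimate $\|p\|_{k,\infty}\leq \|p\|_{k,2}$.

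There is no real obstacle here; the result is just Cauchy--Schwarz in a reproducing kernel Hilbert space. As a sanity check, equality holds when $p=K_\lambda$: in that case $p(\lambda)=(1+|\lambda|^2)^k$ and $\|p\|_{k,2}=(1+|\lambda|^2)^{k/2}$, so both sides equal $(1+|\lambda|^2)^{k/2}$, showing the inequality is sharp. Alternatively, one can reduce to the point $\lambda=0$ via the isometry $T_{-\lambda}$: computing $T_{-\lambda}p(0) = (1+|\lambda|^2)^{-k/2}\,p(-\lambda)$ and using that at $0$ the inequality amounts to $|a_0|\leq \|p\|_{k,2}$ (which follows immediately from the orthonormal basis $\{e_{k,j}\}$), but the RKHS argument is more transparent and will be reused in the sharper local estimates that follow.
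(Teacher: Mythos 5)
Your proof is correct, but it takes a genuinely different route from the paper. You invoke the explicit reproducing kernel $K_\lambda(z)=(1+z\bar\lambda)^k$ and apply Cauchy--Schwarz, getting the bound in one line and simultaneously showing it is sharp (attained at $p=K_\lambda$). The paper instead proves the case $\lambda=0$ from scratch by integrating the subharmonic function $|p|^2$ over circles, computing $(k+1)\int_{\Delta(0,r)}|p|^2(1+|z|^2)^{-k}\,d\nu \geq |p(0)|^2\bigl[1-(1+r^2)^{-(k+1)}\bigr]$, letting $r\to\infty$, and then transporting the estimate to general $\lambda$ via the isometry $T_\lambda$ (using $T_\lambda p(0)=p(\lambda)/(1+|\lambda|^2)^{k/2}$). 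Your alternative sketch at the end --- reducing to $\lambda=0$ by the isometry and reading off $|p(0)|\leq\|p\|_{k,2}$ from the orthonormal basis --- is actually closer in spirit to the paper, but replaces the subharmonicity computation by the even simpler observation that the constant coefficient is dominated by the $\ell^2$ norm of the coefficient vector. The kernel/Cauchy--Schwarz argument you lead with is the cleanest of the three and makes the sharpness transparent, at the price of relying on the explicit form of $K_\lambda$; the paper's subharmonicity route would carry over verbatim to settings where the kernel is not available in closed form.
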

 
  \begin{proof}
  Assume first $\lambda=0$. By the subharmonicity of $|p|^2$, for all $r>0$,
  \begin{align*}
   \int_{\Delta(0,r)}\frac{|p(z)|^2}{(1+|z|^2)^k}\, d\nu(z)&=\int_0^r\Bigl(\frac 1{2\pi} \int_0^{2\pi} |p(se^{i\theta})|^2 d\theta\Bigr)\, \frac{2s\, ds}{(1+s^2)^{k+2}}\\
   &\geq |p(0)|^2\, \int_0^r \frac{2s\, ds}{(1+s^2)^{k+2}}
   =\frac{|p(0)|^2}{k+1}\Bigl[1-\frac 1{(1+r^2)^{k+1}}\Bigr].
  \end{align*}
Therefore
\[
 |p(0)|^2\leq \Bigl[1-\frac 1{(1+r^2)^{k+1}}\Bigr]^{-1} (k+1)\,  \int_{\Delta(0,r)}\frac{|p(z)|^2}{(1+|z|^2)^k}\, d\nu(z),
\]
and letting $r\to\infty$ we get the desired estimate (for $\lambda=0$).

For the general case $\lambda\in\C$ we apply the previous estimate to $T_\lambda p$: by the norm-invariance
\[
 |\bigl(T_\lambda p\bigr)(0)|^2\leq \|T_\lambda p\|_{k,2}^2=\|p\|_{k,2}^2.
\]
Since
\[
 \bigl(T_\lambda p\bigr)(0)=\left(\frac {(1+0)^2}{1+|\lambda|^2}\right)^{k/2} p\bigl(\varphi_\lambda(0)\bigr)=\frac{p(\lambda)}{(1+|\lambda|^2)^{k/2}}
\]
the statement is proved.
 \end{proof}
 
 Next we prove local $L^2$-estimates. As always, assume that $m_k$ is a sequence in $\N$ with $\lim_k \frac{m_k}k=0$.
 
 \begin{lemma}\label{lemma:local-L2}
  For every $a>0$ there exist $C(a)>0$ and $m(a),k_0\in\N$ such that for all $k\geq k_0$, all $p\in\P_k$, all $m\in[ m(a),m_k]$ and all $\lambda\in\C$,
  \[
   \sum_{j=0}^{m-1}\bigl|\langle p, T_\lambda e_{k,j}\rangle_k\bigr|^2 \leq C(a) (k+1)
   \int_{D(\lambda, \frac{\sqrt m-a}{\sqrt k})}\frac{|p(z)|^2}{(1+|z|^2)^k}\, d\nu(z) .
  \]
 \end{lemma}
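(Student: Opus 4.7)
The plan is to reduce the estimate to the case $\lambda=0$ using the isometric involution $T_\lambda$, expand in the orthonormal basis $\{e_{k,j}\}$ (which remains orthogonal when integration is restricted to a disc centered at $0$), and then import Lemma~\ref{lemma:incomplete-beta}(a) to obtain a uniform lower bound on the relevant incomplete beta functions.

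For the reduction, a direct computation using $\varphi_\lambda\circ\varphi_\lambda=\mathrm{id}$ shows that $T_\lambda\circ T_\lambda=\mathrm{id}$ on $\P_k$. Combined with the isometry property, this makes $T_\lambda$ self-adjoint, so $\langle p, T_\lambda e_{k,j}\rangle_k = \langle T_\lambda p, e_{k,j}\rangle_k$. The change of variables $z=\varphi_\lambda(w)$, together with the $\varphi_\lambda$-invariance of $\nu$ and identity \eqref{eq:one-plus}, yields
\[
\int_{D(\lambda, R)} \frac{|p(z)|^2}{(1+|z|^2)^k}\, d\nu(z) = \int_{D(0, R)} \frac{|T_\lambda p(w)|^2}{(1+|w|^2)^k}\, d\nu(w).
\]
Setting $q=T_\lambda p$ and $R=(\sqrt m-a)/\sqrt k$, it therefore suffices to prove
\[
\sum_{j=0}^{m-1} |\langle q, e_{k,j}\rangle_k|^2 \leq C(a)\, \|q\|_{k, D(0, R)}^2.
\]

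Since $D(0,R)$ is a Euclidean disc centered at the origin and the $e_{k,j}$ are monomials, the system $\{e_{k,j}\}$ remains orthogonal in the truncated inner product \eqref{eq:scalar_R}. Expanding $q=\sum_{j=0}^k c_j\, e_{k,j}$ with $c_j=\langle q,e_{k,j}\rangle_k$ and invoking Lemma~\ref{lemma:beta-truncated},
\[
\|q\|_{k, D(0,R)}^2 = \sum_{j=0}^k |c_j|^2\, I(j+1, k-j+1; R^2).
\]
Because $R^2 = (m-2a\sqrt m + a^2)/k \geq (m-2a\sqrt m)/k$ and $I(\cdot,\cdot;\cdot)$ is increasing in its third argument, Lemma~\ref{lemma:incomplete-beta}(a) applied with parameter $2a$ produces $\epsilon(2a)>0$ and thresholds $m(a),k_0$ such that, for $k\geq k_0$, $m\in[m(a),m_k]$, and $j=0,\dots,m-1$,
\[
I(j+1, k-j+1; R^2) \geq \epsilon(2a).
\]
Retaining only the first $m$ terms of the previous identity and discarding the non-negative tail gives the claim with $C(a)=1/\epsilon(2a)$.

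The main technical obstacle is already absorbed into Lemma~\ref{lemma:incomplete-beta}(a), whose proof is deferred to the Annex; the only subtlety in the present argument is to match the quadratic shrinkage $R^2=(\sqrt m-a)^2/k$ of the disc with the linear perturbation $(m-a'\sqrt m)/k$ appearing in the beta estimate, which is handled transparently by rescaling the parameter from $a$ to $2a$ and using monotonicity of $I$.
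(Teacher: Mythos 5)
Your proof is correct and follows essentially the same route as the paper: reduce to $\lambda=0$ via the isometry $T_\lambda$, expand in the orthonormal basis so the restricted norm becomes $\sum_j |c_j|^2\, I(j+1,k-j+1;R^2)$ via Lemma~\ref{lemma:beta-truncated}, and bound the relevant incomplete beta values from below with Lemma~\ref{lemma:incomplete-beta}(a) after observing that $(\sqrt m - a)^2 \geq m - 2a\sqrt m$. You supply slightly more detail than the paper on the reduction step (verifying that $T_\lambda$ is a self-adjoint involution and checking the change of variables), but the argument is the same.
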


 \begin{proof} 
Taking $q=T_\lambda p$ instead of $p$ (if necessary), we can assume that $\lambda=0$. Then the inequality to be proved is
 \[
   \sum_{j=0}^{m-1}\bigl|\langle p, e_j\rangle_k\bigr|^2\leq C(a) (k+1)\int_{D(\lambda, \frac{\sqrt m-a}k)}\frac{|p(z)|^2}{(1+|z|^2)^k}\, d\nu(z) .
  \]
  Write $p=\sum_{j=0}^k a_j e_{k,j}$, where $e_{k,j}$ is the orthonormal basis given in \eqref{ob}.  Since the system $\{e_{k,j}\}_{j=0}^k$ is an orthogonal basis in any disc $D(0,R)$, with the restricted scalar product \eqref{eq:scalar_R}, by Lemma~\ref{lemma:beta-truncated}, the above estimate is just
  \begin{align*}
   \sum_{j=0}^{m-1} |a_j|^2&\leq C(a)  \sum_{j=0}^{k} |a_j|^2\ \|e_{k,j}\|_{k,D(\lambda, \frac{\sqrt m-a}k)}^2 \\
   &=C(a)  \sum_{j=0}^{k} |a_j|^2 I \Bigl(j+1,k-j+1;(\frac{\sqrt m-a}k)^2\Bigr).
  \end{align*}
  Since for $m\geq a^2$,
\[
m-2a\sqrt m\leq  (\sqrt m-a)^2\leq m-a\sqrt m,
\]
the result is the given by Lemma~\ref{lemma:incomplete-beta} (a).
 \end{proof}

  \begin{lemma}\label{lemma:local-L-inf}
  Given $\eta\in (0,1]$ there exist $a=a(\eta)>0$ and $m(a),k_0\in\N$  such that for all $k\geq k_0$, all $p\in\P_k$, all $m\in [m(a),m_k]$ and all $\lambda\in\C$, if
  \begin{align*}
   &\text{(i)}\quad \sum_{j=0}^{m-1}\bigl|\langle p,  T_\lambda e_{k,j}\rangle_k\bigr|^2 \leq \eta/2 , \\
   &\text{(ii)}\quad (k+1)\int_{D(\lambda, \sqrt{\frac mk})}\frac{|p(z)|^2}{(1+|z|^2)^k}\, d\nu(z) \leq 1.
  \end{align*}
then
  \[
  (k+1)
   \int_{D(\lambda, \frac{\sqrt{m}-a}{\sqrt k})} \frac{|p(z)|^2}{(1+|z|^2)^k}\, d\nu(z)\leq \eta.
  \]
 \end{lemma}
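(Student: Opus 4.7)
The plan is to reduce to the case $\lambda=0$ via the isometry $T_\lambda$, expand $p$ in the orthonormal basis $\{e_{k,j}\}$, and convert all three integrals appearing in the statement to weighted $\ell^2$ sums of coefficients whose weights are incomplete beta functions, by virtue of Lemma~\ref{lemma:beta-truncated}. The argument then collapses to splitting the resulting sum at the index $j=m$: the piece with $j<m$ is controlled by hypothesis (i) and the trivial bound $I\leq 1$, while the piece with $j\geq m$ is controlled by (ii) together with the decay estimate in Lemma~\ref{lemma:incomplete-beta}(b).

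In detail, since $T_\lambda$ is an involutive isometry of $\P_k$ and the rotation $\varphi_\lambda$ sends $D(\lambda,r)$ to $D(0,r)$ while preserving $d\nu$, replacing $p$ by $T_\lambda p$ allows us to assume $\lambda=0$. Writing $p=\sum_{j=0}^k a_j e_{k,j}$ gives $\langle p,e_{k,j}\rangle_k=a_j$, and Lemma~\ref{lemma:beta-truncated} yields, for every $R\in(0,1)$,
\[
(k+1)\int_{D(0,R)}\frac{|p(z)|^2}{(1+|z|^2)^k}\,d\nu(z)=\sum_{j=0}^{k}|a_j|^2\,I(j+1,k-j+1;R^2).
\]
Thus (i) becomes $\sum_{j<m}|a_j|^2\leq\eta/2$, (ii) becomes $\sum_{j=0}^{k}|a_j|^2\,I(j+1,k-j+1;m/k)\leq 1$, and the goal is to show
\[
S:=\sum_{j=0}^k|a_j|^2\,I(j+1,k-j+1;R_-^2)\leq\eta,\qquad R_-:=\frac{\sqrt m-a}{\sqrt k}.
\]

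For the low part $j<m$ I use $I(j+1,k-j+1;R_-^2)\leq 1$ and (i) to obtain directly $S_{\mathrm{low}}\leq\eta/2$. For the high part $j\geq m$ I apply Lemma~\ref{lemma:incomplete-beta}(b) with $\epsilon=\eta/2$: this produces a constant $a>0$, which I take as the output constant of the present lemma, together with thresholds $k_0$ and $m_0(a)$ such that
\[
I\bigl(j+1,k-j+1;(m-a\sqrt m)/k\bigr)\leq (\eta/2)\,I(j+1,k-j+1;m/k)
\]
for $k\geq k_0$, $m\in[m_0,m_k]$ and $j\geq m$. Setting $m(a):=\max(m_0,a^2)$ forces $R_-^2=(m-2a\sqrt m+a^2)/k\leq(m-a\sqrt m)/k$, so by monotonicity of $I(j+1,k-j+1;\cdot)$ the same bound holds with $R_-^2$ on the left. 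Combined with (ii) this gives $S_{\mathrm{high}}\leq\eta/2$, whence $S\leq\eta$.

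The computation is essentially mechanical once Lemma~\ref{lemma:incomplete-beta}(b) is in hand. The only delicate point, and the place where I expect to have to be careful, is the calibration of the two constants denoted $a$: the contracted disc of radius $(\sqrt m-a)/\sqrt k$ must fit inside the disc of radius $\sqrt{(m-a\sqrt m)/k}$ at which the decay estimate for the incomplete beta function actually kicks in. This matching is what forces the lower bound $m\geq a^2$ and is precisely the reason the restriction $m\in[m(a),m_k]$ appears in the statement.
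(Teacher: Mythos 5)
Your proof is correct and follows essentially the same route as the paper's: reduce to $\lambda=0$ via $T_\lambda$, expand in the orthonormal basis $\{e_{k,j}\}$, rewrite the three quantities as weighted $\ell^2$ sums of coefficients using Lemma~\ref{lemma:beta-truncated}, split at $j=m$, control the low part via hypothesis (i) and $I\leq 1$, and control the high part via hypothesis (ii) and Lemma~\ref{lemma:incomplete-beta}(b). The calibration point you flag, namely that $(\sqrt m-a)^2\leq m-a\sqrt m$ once $m\geq a^2$ so that monotonicity of the incomplete beta function applies, is exactly the step the paper uses implicitly.
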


 \begin{proof} We proceed similarly to the proof of Lemma~\ref{lemma:local-L2}: assume that $\lambda=0$ and
 write $p=\sum_{j=0}^k a_j e_{k,j}$, so that, by Lemma~\ref{lemma:beta-truncated}, hypothesis (ii) is
 \begin{align*}
  (k+1)\int_{D(0, \sqrt{\frac mk})}\frac{|p(z)|^2}{(1+|z|^2)^k}\, d\nu(z)=\sum_{j=0}^k |a_j|^2\ I\Bigl(j+1,k-j+1; \frac mk\Bigr)\leq 1.
 \end{align*}
Similarly, by (i) we deduce that
\begin{align*}
 (k+1)\int_{D(\lambda, \frac{\sqrt{m}-a}{\sqrt k})}\frac{|p(z)|^2}{(1+|z|^2)^k}\, d\nu(z)&=
 \sum_{j=0}^k |a_j|^2\ I\Bigl(j+1,k-j+1; (\frac{\sqrt{m}-a}{\sqrt k})^2\Bigr)\\
 &\leq \sum_{j=0}^{m-1} |a_j|^2+ \sum_{j=m}^k |a_j|^2\ I\Bigl(j+1,k-j+1; \frac{m-a\sqrt m}{k}\Bigr)\\
& \leq \frac{\eta}2 + \sum_{j=m}^k |a_j|^2\ I\Bigl(j+1,k-j+1; \frac{m-a\sqrt m}{k}\Bigr).
\end{align*}
Therefore, it is enough to see that there exists $a>0$ and  $k_0, m(a)\in\N$ such that for $k\geq k_0$ and $m\in[m_0,m_k]$
\[
 \sum_{j=m}^k |a_j|^2\ I\Bigl(j+1,k-j+1;\frac{m-a\sqrt m}{k}\Bigr)\leq \frac {\eta} 2 \sum_{j=0}^k |a_j|^2\ I\Bigl(j+1,k-j+1;\frac mk\Bigr).
\]
This is precisely Lemma~\ref{lemma:incomplete-beta}(b).
 \end{proof}

\section{Proof of Theorems~\ref{th:sampling-L2} and ~\ref{thm:L2-sampling-suff} }\label{sec:L2-sampling}
Let's begin by showing that one of the estimates in the definition of $L^2$-sampling array is equivalent to the finite overlap condition.

\begin{proposition}\label{prop:overlap_sampling}
 An array $X=\{(\Lambda_k, m^{(k)})\}_{k\geq 1}$  has finite overlap if and only if there exist $C>0$ and $k_0\in\N$ such that for all $k\geq k_0$ and all $p\in\mathcal P_k$
 \[
  \sum_{\lambda\in\Lambda_k}\sum_{j=0}^{m_\lambda-1} \bigl|\langle p, T_\lambda e_{k,j}\rangle_k\bigr|^2\leq C \|p\|_k^2.
 \]
\end{proposition}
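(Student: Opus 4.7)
The equivalence splits into two implications. For the forward direction (finite overlap implies the upper bound), the strategy is to localize each block $\sum_{j=0}^{m_\lambda-1}|\langle p,T_\lambda e_{k,j}\rangle_k|^2$ using Lemma~\ref{lemma:local-L2} and then sum using finite overlap. For the converse, the strategy is to test the upper bound against the normalized reproducing kernel and read off the constraint on overlaps.

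For the forward direction, fix $a>0$ and let $m_0=m(a)$ from Lemma~\ref{lemma:local-L2}. I would split $\Lambda_k$ into $\{\lambda:m_\lambda\ge m_0\}$ and $\{\lambda:m_\lambda<m_0\}$. For the first set, Lemma~\ref{lemma:local-L2} bounds the block by $C(a)(k+1)\int_{D(\lambda,(\sqrt{m_\lambda}-a)/\sqrt k)}|p(z)|^2(1+|z|^2)^{-k}\,d\nu(z)$; since the integration disc is contained in $D_{\lambda,m_\lambda}$, swapping the sum and the integral and applying finite overlap of $\{D_{\lambda,m_\lambda}\}$ yields the desired bound $C(a)S_X\|p\|_k^2$. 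For the second set, I would use the monotonicity $\sum_{j=0}^{m_\lambda-1}\le\sum_{j=0}^{m_0-1}$ and apply Lemma~\ref{lemma:local-L2} with the dummy multiplicity $m_0$; the resulting inflated discs of radius $O(1/\sqrt k)$ still have bounded overlap, because $D_{\lambda,1}\subseteq D_{\lambda,m_\lambda}$ gives finite overlap of $\{D_{\lambda,1}\}$, and a standard volume-packing argument then bounds, in terms of $m_0$ and $S_X$, the number of $\lambda$'s whose inflated disc contains any fixed point.

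For the converse, I would test with $p=K_{z_0}/\|K_{z_0}\|_k$, where $K_{z_0}(z)=(1+z\bar z_0)^k$ is the reproducing kernel and $\|K_{z_0}\|_k^2=(1+|z_0|^2)^k$. Using the explicit formula for $T_\lambda e_{k,j}$ together with identity~\eqref{eq:one-plus}, a direct computation yields
\[
|\langle p,T_\lambda e_{k,j}\rangle_k|^2=\binom{k}{j}u_\lambda^{j}(1-u_\lambda)^{k-j},\qquad u_\lambda:=d^2(z_0,\lambda),
\]
so each block equals the binomial tail $S_\lambda(u_\lambda):=\sum_{j=0}^{m_\lambda-1}\binom{k}{j}u_\lambda^{j}(1-u_\lambda)^{k-j}$. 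The hypothesized upper bound applied to $p$ reads $\sum_{\lambda\in\Lambda_k}S_\lambda(u_\lambda)\le C$, so the proof reduces to exhibiting a uniform $\varepsilon>0$ with $S_\lambda(u_\lambda)\ge\varepsilon$ whenever $u_\lambda<m_\lambda/k$ (equivalently $z_0\in D_{\lambda,m_\lambda}$); then $\#\{\lambda:z_0\in D_{\lambda,m_\lambda}\}\le C/\varepsilon$ for every $z_0$.

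The hard step is this uniform lower bound on $S_\lambda(u)$ over $m\ge 1$, $u\in(0,m/k)$, and $k\ge k_0$. For $m$ bounded, the trivial estimate $S_\lambda(u)\ge(1-u)^k\ge(1-m_0/k)^k\ge\tfrac12 e^{-m_0}$ suffices for $k$ large. For $m$ growing with $k$, the quantity $S_\lambda(u)$ is the probability that a binomial variable with mean $ku<m$ stays strictly below $m$, which by a central limit argument (or equivalently the incomplete beta estimates underlying Lemma~\ref{lemma:incomplete-beta}) is bounded below by roughly $1/2$. Joining the two regimes produces the required uniform $\varepsilon$ and completes the proof.
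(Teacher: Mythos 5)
Your proposal is correct and follows essentially the same route as the paper: for the ``finite overlap $\Rightarrow$ upper bound'' direction you localize each block with Lemma~\ref{lemma:local-L2} and sum using the overlap bound, and for the converse you test against $K_{z_0}/\|K_{z_0}\|_k$, which is exactly the paper's test function $T_{z_0}1$, reducing the claim to a uniform lower bound for the binomial tail $\sum_{j<m_\lambda}\binom{k}{j}u^j(1-u)^{k-j}$ when $u<m_\lambda/k$. That last lower bound is precisely what the paper isolates as Lemma~\ref{lemma:B} (rather than Lemma~\ref{lemma:incomplete-beta}), and since both Lemma~\ref{lemma:local-L2} and Lemma~\ref{lemma:B} are stated only for $m\ge m_0$, your explicit handling of the small-$m_\lambda$ regime (the packing argument in one direction, the crude bound $(1-u)^k\ge(1-m_0/k)^k$ in the other) fills a detail that the paper passes over silently.
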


 \begin{proof}
  Assume first that the inequality in the statement holds. Fix $z\in\C$ and define $p=T_z 1=T_z e_{k,0}$. Then
  \[
   \langle p, T_\lambda e_{k,j}\rangle_k=\langle T_z e_{k,0}, T_\lambda e_{k,j}\rangle_k
   =\langle  e_{k,0}, T_z(T_\lambda e_{k,j})\rangle_k=\overline{T_z(T_\lambda e_{k,j})(0)}.
  \]
 Since
 \begin{align*}
  T_z(T_\lambda e_{k,j})(0)&=\frac 1{(1+|z|^2)^{k/2}} (T_\lambda e_{k,j})(z)
  =\frac{(1+\bar\lambda z)^k}{(1+|z|^2)^{k/2}(1+|\lambda|^2)^{k/2}}\, e_{k,j}(\varphi_\lambda(z))\\
  &= \left[\frac{(1+\bar\lambda z)^2}{(1+|z|^2)(1+|\lambda|^2)}\right]^{k/2} \binom{k}{j}^{1/2} \bigl(\varphi_\lambda(z)\bigr)^j  ,
 \end{align*}
 the sampling inequality in the statement on $p$ and \eqref{eq:one-plus} yield, in particular,
 \begin{align*}
  C&\geq \sum_{\lambda\in\Lambda_k}\sum_{j=0}^{m_\lambda-1} \binom{k}{j} \frac{|1+\bar\lambda z|^{2k}}{(1+|z|^2)^k(1+|\lambda|^2)^k}\, \bigl|\varphi_\lambda(z)\bigr|^{2j}\\
&=\sum_{\lambda\in\Lambda_k} \frac 1{(1+|\varphi_\lambda(z)|^2)^k} \sum_{j=0}^{m_\lambda-1} \binom{k}{j} \bigl|\varphi_\lambda(z)\bigr|^{2j}\\
&\geq \sum_{\substack{
\lambda\in\Lambda_k  \\
z\in D_{\lambda,m_\lambda}}} \frac 1{(1+|\varphi_\lambda(z)|^2)^k} \sum_{j=0}^{m_\lambda-1} \binom{k}{j} \bigl|\varphi_\lambda(z)\bigr|^{2j}
 \end{align*}
From Lemma~\ref{lemma:B} with $x=|\varphi_\lambda(z)|^{2}$ (see Annex) we deduce finally that there exist $c>0$ and $k_0,m_0\in\N$ such that for $k\geq k_0$ and $m_\lambda\geq m_0$
\[
 C\geq \sum_{\substack{
\lambda\in\Lambda_k  \\
z\in D_{\lambda,m_\lambda}}}  c= c \sum_{\lambda\in\Lambda_k} 
\chi_{D_{{\lambda,m_\lambda}}}(z),
\]
so $X$ has finite overlap.

Assume now that $X$ has finite overlap. Applying the local $L^2$-estimates of Lemma~\ref{lemma:local-L2} with $a=0$ to $p\in \mathcal P_k$,
we obtain, 
\begin{align*}
 \sum_{\lambda\in\Lambda_k}\sum_{j=0}^{m_\lambda-1} \bigl|\langle p, T_\lambda e_{k,j}\rangle_k\bigr|^2&\leq
 \sum_{\lambda\in\Lambda_k} C (k+1) \int_{D_{\lambda,m_\lambda}} \frac{|p(z)|^2}{(1+|z|^2)^k}\, d\nu(z)\\
 &\leq C\Bigl[\sup_{k\geq 1}\sup_{z\in\C} \sum_{\lambda\in\Lambda_k} \chi_{D_{\lambda,m_\lambda}}(z)\Bigr]\, \|p\|_k^2.
\end{align*}
\end{proof}

We are ready to prove the necessary conditions for sampling.

 \begin{proof}[Proof of Theorem~\ref{th:sampling-L2}]
  (a) is consequence of Proposition~\ref{prop:overlap_sampling}.
  
  (b) If not, for all $c>0$ and there exists $k(c)\in\N$ such that
  \[
   \C\setminus \bigcup_{\lambda\in\Lambda_{k(c)}} D\bigl(\lambda, \frac{\sqrt{m_\lambda}+c}{\sqrt{k(c)}}\bigr)\neq\emptyset.
  \]
  Take a sequence $c_n$ increasing to $ +\infty$ and $k_n\in\N$ such that
  \[
   w_n\in\C\setminus\bigcup_{\lambda\in\Lambda_{k_n}} D\bigl(\lambda, \frac{\sqrt{m_\lambda}+c_n}{\sqrt {k_n}}\bigr).
  \]
  Then, $w_n$ gets further away from the critical disks; explicitly
  \[
   \rho_n:= d\bigl(w_n,\bigcup_{\lambda\in\Lambda_{k_n}} D\bigl(\lambda, \sqrt{\frac{m_\lambda}{k_n}}\bigr)\bigr)\geq \frac{c_n}{\sqrt{k_n}},
  \]
and the local estimate of Lemma~\ref{lemma:local-L2} applied to $\Lambda_{k(n)}$ and $p=T_w 1$ gives a contradiction with the sampling inequality: on the one hand $\|T_{w_n} 1\|_k=1$, and on the other, by \eqref{eq:one-plus}, the invariance and the finite overlap condition,
\begin{align*}
 \sum_{\lambda\in\Lambda_{k_n}}\sum_{j=0}^{m_\lambda-1} \bigl|\langle T_{w_n} 1, T_\lambda e_{k,j}\rangle_{k_n}\bigr|^2&\lesssim \sum_{\lambda\in\Lambda_{k_n}}\int_{D_{\lambda,m_\lambda}}\frac {|(T_{w_n} 1)(\zeta)|^2}{(1+|\zeta|^2)^{k_n}}\, d\nu(\zeta)\\
 &=\sum_{\lambda\in\Lambda_{k_n}}\int_{D_{\lambda,m_\lambda}}\frac {d\nu(\zeta)}{(1+|\varphi_{w_n}(\zeta)|^2)^{k_n}}\\
 &=\sum_{\lambda\in\Lambda_{k_n}}\int_{D_{\lambda,m_\lambda}} \bigl[1-d^2(w_n,\zeta)]^{k_n} d\nu(\zeta)\\
& \leq C \int_{\C} \bigl[1-d^2(w_n,\zeta)]^{k_n} d\nu(\zeta)\\
 &\lesssim \int_{\C}\Bigl(1-\frac{c_n^2}{k_n}\Bigr)^{k_n} d\nu(\zeta)=\Bigl(1-\frac{c_n^2}{k_n}\Bigr)^{k_n} \stackrel{n\to\infty}{\longrightarrow} 0.
\end{align*}
 \end{proof}
 
Let's prove now the sufficient conditions for sampling.

\begin{proof}[Proof of Theorem~\ref{thm:L2-sampling-suff}]
By hypothesis and by Proposition~\ref{prop:overlap_sampling}, there exist $C>0$ and $k_0\in\N$ such that for all $p\in\P_k$
\[
 \sum_{\lambda\in\Lambda_k}\sum_{j=0}^{m_\lambda-1} |\langle p,T_\lambda e_{k,j}\rangle_k|^2\leq C\|p\|_{k,2}^2.
\]
 Assume that $X$ is not $L^2$-sampling. Then for every $\epsilon>0$ there exist $k(\epsilon)\in\N$ and $p_{k(\epsilon)}\in\P_{k(\epsilon)}$ such that $\|p_{k(\epsilon)}\|_{k(\epsilon),2}=1$ and
 \[
  \sum_{\lambda\in\Lambda_{k(\epsilon)}}\sum_{j=0}^{m_\lambda-1} \bigl|\langle p_{k(\epsilon)}, T_\lambda e_j\rangle_{k(\epsilon)}\bigr|^2\leq\epsilon.
 \]
 By Lemma~\ref{lemma:local-L-inf} there exist $a(\epsilon)>0$ such that $k\geq k_0$ and $m$ big enough
\[
 (k+1)\int_{D\bigl(\lambda,\frac{\sqrt{m_\lambda}-a}{\sqrt k}\bigr)} \frac{|p_{k(\epsilon)}(z)|^2}{(1+|z|^2)^{k(\epsilon)}} d\nu(z)\leq 2\epsilon (k+1)\int_{D_{\lambda,m_\lambda}} \frac{|p_{k(\epsilon)}(z)|^2}{(1+|z|^2)^{k(\epsilon)}} d\nu(z).
\]
Now split
\begin{align*}
 \|p_{k(\epsilon)}\|_{k(\epsilon), 2}^2&=(k+1)\int_{\mathcal U_k(-a)} \frac{|p_{k(\epsilon)}(z)|^2}{(1+|z|^2)^{k(\epsilon)}} d\nu(z)+ (k+1)\int_{\C\setminus\mathcal U_k(-a)} \frac{|p_{k(\epsilon)}(z)|^2}{(1+|z|^2)^{k(\epsilon)}} d\nu(z).
\end{align*}
With the previous estimate on the first term and Lemma~\ref{lemma:pointwise} on the second, together with the finite overlap condition,
\begin{align*}
 1=&\|p_{k(\epsilon)}\|_{k(\epsilon), 2}^2\leq\sum_{\lambda\in\Lambda_{k(\epsilon)}}
 (k+1)\int_{D\bigl(\lambda,\frac{\sqrt{m_\lambda}-a}{\sqrt k}\bigr)} \frac{|p_{k(\epsilon)}(z)|^2}{(1+|z|^2)^{k(\epsilon)}} d\nu(z)+ (k+1)\int_{\C\setminus\mathcal U_k(-a)}  d\nu(z)\\
 \leq & 2\epsilon \sum_{\lambda\in\Lambda_{k(\epsilon)}}
 (k+1)\int_{D_{\lambda,m_\lambda}} \frac{|p_{k(\epsilon)}(z)|^2}{(1+|z|^2)^{k(\epsilon)}} d\nu(z)
 + (k+1) \nu\bigl(\C\setminus\mathcal U_k(-a)\bigr)\\
 \leq & 2\epsilon S_X \|p_{k(\epsilon)}\|_{k(\epsilon), 2}^2+ (k+1) \nu\bigl(\C\setminus\mathcal U_k(-a)\bigr)= 2\epsilon S_X+ (k+1) \nu\bigl(\C\setminus\mathcal U_k(-a)\bigr).
\end{align*}
Take now $\epsilon<1/(2S_X)$ and fix $a=a(\epsilon)$ as above (observe that $a$ depends only on $X$). Then, taking the inequality above to the limit as $k\to\infty$ we reach a contradiction.
\end{proof}

 \section{Proof of Theorem~\ref{thm:intL2}}\label{sec:int}
 
(a) Assume that the conclusion does not hold. Then for all $c>1$ there are infinitely many $k$ for which there exist $\lambda,\lambda'\in\Lambda_k$ with
\[
D\Bigl(\lambda, \frac{\sqrt{m_\lambda}-(c+1)}{\sqrt k}\Bigr)\cap D\Bigl(\lambda', \frac{\sqrt{m_\lambda'}-(c+1)}{\sqrt k}\Bigr)\neq\emptyset.
\]
In particular, there exist $w_k\in\C$ such that
\[
 D\bigl(w_k,1/\sqrt k\bigr)\subset D\Bigl(\lambda, \frac{\sqrt{m_\lambda}-c}{\sqrt k}\Bigr)\cap D\Bigl(\lambda', \frac{\sqrt{m_\lambda'}-c}{\sqrt k}\Bigr).
\]
Since $X$ is $L^2$-interpolating, there exist $p_k\in\P_k$ vanishing at $\lambda$ at order $m_\lambda$ and coinciding with
$T_{w_k}1$ at order $m_{\lambda'}$ on $\lambda'$; more precisely, such that
\begin{itemize}
 \item $p\in N_{\lambda,m_\lambda}$, that is $\langle p, T_\lambda e_{k,j}\rangle_k=0$ for all $j<m_{\lambda}$,
 \item $p-T_{w_k} 1\in N_{\lambda',m_{\lambda'}}$,
 \item $\|p\|_{k,2}\leq M_X$ ($M_X$ interpolating constant).
\end{itemize}
Then
\begin{align*}
 \int_{D(w_k,\frac 1{\sqrt{k}})}\frac {|(T_{w_k}1)(z)|^2}{(1+|z|^2)^k}\, d\nu(z)&\leq \int_{D(w_k,\frac 1{\sqrt{k}})}\frac {|(T_{w_k}1-p)(z)|^2}{(1+|z|^2)^k}\, d\nu(z)+
 \int_{D(w_k,\frac 1{\sqrt{k}})}\frac {|p(z)|^2}{(1+|z|^2)^k}\, d\nu(z)\\
 \leq\int_{D\bigl(\lambda',\frac{\sqrt{m_\lambda'}-c}{\sqrt k}\bigr)} &\frac {|(T_{w_k}1-p)(z)|^2}{(1+|z|^2)^k}\, d\nu(z)+
 \int_{D\bigl(\lambda,\frac{\sqrt{m_\lambda}-c}{\sqrt k}\bigr)}\frac {|p(z)|^2}{(1+|z|^2)^k}\, d\nu(z).
\end{align*}
By the local estimates of Lemma~\ref{lemma:local-L-inf} (on both $p$ at $\lambda$ and $p-T_{w_k} 1$ at $\lambda'$) this implies
\[
 I_k:=(k+1)\int_{D(w_k,\frac 1{\sqrt{k}})}\frac {|(T_{w_k}1)(z)|^2}{(1+|z|^2)^k}\, d\nu(z)=\varepsilon(c) M_X^2,
\]
where $\varepsilon(c)>0$ is such that $\lim\limits_{c\to\infty} \varepsilon(c)=0$.

But, on the other hand, it is clear that the left-hand side of this estimate is bounded below: by the invariance, and computing as in the proof of Lemma~\ref{lemma:pointwise},
\begin{align*}
  I_k&=(k+1)\int_{D(0,\frac 1{\sqrt{k}})}\frac {d\nu(z)}{(1+|z|^2)^k}
  =(k+1)\int_{\Delta\bigl(0,\sqrt{\frac {1/k}{1-1/k}}\bigr)}\frac {d\nu(z)}{(1+|z|^2)^k}\\
  &=1-\bigl(1-\frac 1k\bigr)^{k+1}\stackrel{k\to\infty}{\xrightarrow{\hspace{1.5cm}}} 1-e^{-1}>0.
\end{align*}
We reach therefore a contradiction.

\bigskip 

(b) We adapt to this setting the techniques introduced in \cite{BOC}.

Let $v_X^k=\bigl\{v_\lambda\bigr\}_{\lambda\in\Lambda_k}$, $k\geq 1$, with $v_\lambda=(v_\lambda^j)_{ j<m_\lambda}$, such that
\[
 \sup_{k\geq 1} \|v_X^k\|_{\ell^2}^2=\sup_{k\geq 1}\Bigl(\sum_{\lambda\in\Lambda_k}\sum_{j<m_\lambda} |v_\lambda^j|^2\Bigr)<+\infty.
\]
Take polynomials $(p_\lambda)_{\lambda\in\Lambda_k}$ such that $\langle p, e_{k,j}\rangle_k=v_\lambda^j$ for
$\lambda\in\Lambda_k$, $j<m_\lambda$, and
\[
\bigl\|p_\lambda\bigr\|_{\P_k/N_{0,m_\lambda}}=\|p_\lambda\|_{k,2}\ ,\qquad \sup_{k\geq 1} 
\sum_{\lambda\in \Lambda_k} \bigl\|p_\lambda\bigr\|_{\P_k/N_{0,m_\lambda}}^2<+\infty,
\]
which exist by hypothesis. 
Then the polynomials $q_\lambda=T_\lambda p_\lambda$ provide local interpolation
\[
 \langle q_\lambda, T_\lambda e_{k,j}\rangle_k=\langle T_\lambda p_\lambda, T_\lambda e_{k,j}\rangle_k=\langle p_\lambda, e_{k,j}\rangle_k=v_\lambda^j\qquad \lambda\in\Lambda_k,\ j<m_\lambda
\]
with controlled norms:
\begin{equation}\label{estimate-ql}
 \sum_{\lambda\in \Lambda_k} \bigl\|q_\lambda\bigr\|_{k,2}^2=\sum_{\lambda\in \Lambda_k} \bigl\|p_\lambda\bigr\|_{k,2}^2= \sum_{\lambda\in \Lambda_k} \bigl\|p_\lambda\bigr\|_{\P_k/N_{0,m_\lambda}}^2\lesssim 1.
\end{equation}

Let
\[
 D_{\lambda, m_\lambda}=D\bigl(\lambda,\frac{\sqrt{m_\lambda}}{\sqrt k}\bigr)\qquad D_{\lambda, m_\lambda}^\prime
 =D\bigl(\lambda,\frac{\sqrt{m_\lambda}+c}{\sqrt k}\bigr).
\]
By hypothesis, for $k$ big, the disks $\{D_{\lambda,m_\lambda}^\prime\}_{\lambda\in\Lambda_k}$, are pairwise disjoint.

Now, using this separation, we shall glue together the previous local interpolating functions and produce a smooth interpolating function with the desired growth. To that purpose consider $\eta\in\mathcal C^\infty(\R)$  such that
 \[
  \eta(x)=
  \begin{cases}
   1\quad&\text{for $x<-c$}\\
   0\quad&\text{for $x>0$},
  \end{cases}
 \]
 with $|\eta'|\lesssim1$, and define, for $k\geq 1$,
 \[
  F_k(z)=\sum_{\lilk}q_\lambda(z)\, \eta\bigl(\sqrt k\ d(\lambda,z)-\sqrt{m_\lambda}-c\bigr).
 \]
 Observe that  $\supp(F_k)\subseteq \cup_{\lambda\in\Lambda_k}D_{\lambda,m_\lambda}^\prime$ and that $F_k$ has 
the characteristic growth of $L^2$ polynomials in $\P_k$: for $z\in D_{\lambda,m_\lambda}^\prime$
\[
 |F_k(z)|=|q_\lambda(z)| \eta\bigl(\sqrt k\ d(\lambda,z)-\sqrt{m_\lambda}-c\bigr) \leq |q_\lambda(z)|.
\]
Hence, by \eqref{estimate-ql}, 
\begin{equation}\label{estimate-Fk}
 (k+1)\int_{\C}\frac{|F_k(z)|^2}{(1+|z|^2)^k}\, d\nu(z)\leq  \sum_{\lambda\in\Lambda_k}(k+1)\int\limits_{D_{\lambda,m_\lambda}^\prime} \frac{|q_\lambda(z)|^2}{(1+|z|^2)^k}\, d\nu(z)\leq \sum_{\lambda\in\Lambda_k}\|q_\lambda\|_2^2\lesssim 1.
\end{equation}

We want to correct $F_k$ to produce a holomorphic polynomial with the right growth and interpolating the prescribed values on each $\lambda\in\Lambda_k$. We look for a function of the form $p_k=F_k-u_k$, so we want to find solutions to the equation $\bar\partial u_k=\bar\partial F_k$ with (weighted) $L^2$-estimates.

Observe that $\bar\partial F_k$ is supported in the union of the chordal annuli $D_{\lambda,m_\lambda}^\prime\setminus D_{\lambda,m_\lambda}$. For $z$ in such annulus,
\[
\frac{\partial F_k}{\partial\bar z}(z)=q_\lambda(z)\, \eta'\bigl(\sqrt k\ d(\lambda,z)-\sqrt{m_\lambda}-c\bigr) \sqrt k\, \frac{\partial}{\partial \bar z}\, d(\lambda,z),
\]
hence
\[
 |\bar\partial F_k(z)|^2\lesssim k\, |q_\lambda(z)|^2  \bigl|\frac{\partial}{\partial \bar z}\, d(\lambda,z)\bigr|.
\]
Since
\[
 d(\lambda,z)=\frac{|z-\lambda|}{(1+|z|^2)^{1/2} (1+|\lambda|^2)^{1/2}}
\]
we have
\begin{align*}
 \frac{\partial}{\partial \bar z}\, d(\lambda,z)&=\frac 1{(1+|\lambda|^2)^{1/2}}\left[\frac{\frac 12\frac{z-\lambda}{|z-\lambda|}(1+|z|^2)^{1/2}-|z-\lambda|\frac 12 (1+|z|^2)^{-1/2}\, z}{1+|z|^2}\right]\\
 &=\frac{1/2}{(1+|\lambda|^2)^{1/2} (1+|z|^2)^{3/2}} \frac{z-\lambda}{|z-\lambda|}\bigl[1+|z|^2-(\bar z-\bar\lambda)z\bigr]\\
&=\frac 12\frac{1+\lambda z}{(1+|\lambda|^2)^{1/2} (1+|z|^2)^{3/2}} \frac{z-\lambda}{|z-\lambda|}.
\end{align*}
With this and \eqref{eq:one-plus},
\begin{align*}
 (1+|z|^2)^2 |\bar\partial F_k(z)|^2 &\lesssim k\, |q_\lambda(z)|^2 \frac{|1+\lambda z|^2}{(1+|\lambda|^2) (1+|z|^2)}= \frac{k\, |q_\lambda(z)|^2}{1+|\varphi_\lambda(z)|^2}.
\end{align*}

Since $1+|\varphi_\lambda(z)|^2\leq 1+r_{\lambda,m_\lambda}^2(c)\lesssim 1$ for $z\in D_{\lambda,m_\lambda}^ \prime\setminus D_{\lambda,m_\lambda}$, we deduce that
\begin{equation}\label{estimate-db}
 (1+|z|^2)^2\Bigl|\frac{\partial F_k}{\partial \bar z}(z)\Bigr|^2\lesssim k\, |q_\lambda(z)|^2\ \qquad z\in D_{\lambda,m_\lambda}^ \prime\setminus D_{\lambda,m_\lambda}.
\end{equation}

Now we solve the $\bar\partial$-equation using H\"ormander's estimates  (see \cite[(4.2.6)]{Hor}): given $\phi$ subharmonic in $\C$ there exists a solution $u_k$ to the equation $\bar\partial u_k=\bar\partial F_k$ such that
\[
 \int_{\C} |u_k|^2\, e^{-\phi} \leq \int_{\C} |\bar\partial F_k|^2\, \frac{e^{-\phi}}{\Delta\phi},
\]
where here $\Delta\phi=\dfrac{\partial^2\phi}{\partial z\partial\bar z}$.

Take 
\[
 \phi_k(z)=(k+2)\log (1+|z|^2)+v_k(z),
\]
where
\[
 v_k(z)=\sum_{\lambda\in\Lambda_k} m_\lambda\, \left[\log\bigl(\frac k{m_\lambda} d^2(z,\lambda)\bigr)+1-(\frac k{m_\lambda} d^2(z,\lambda)\bigr)\right]\, \chi_{D_{\lambda,m_\lambda}}(z).
\]
By the separation hypothesis $v_k\in \mathcal C^1(\C)$. Also, $\supp(v_k)\subset\cup_{\lambda\in\Lambda_k} D_{\lambda,m_\lambda}$. In particular,
\[
 \supp(v_k)\cap \supp (\bar \partial F_k)=\emptyset.
\]
Then, since $v_k\leq 0$ by construction,
\begin{align*}
 \int_{\C}\frac{|u_k(z)|^2}{(1+|z|^2)^k}\, d\nu(z)&=\frac 1{\pi}\int_{\C}\frac{|u_k(z)|^2}{(1+|z|^2)^{k+2}}\, dm(z)\leq \frac 1{\pi}\int_{\C} |u_k|^2\, e^{-\phi_k} 
 \leq \frac 1{\pi}\int_{\C} |\bar\partial F_k|^2\, \frac{e^{-\phi_k}}{\Delta\phi_k}.
\end{align*}
As mentioned above, on $\supp(\bar \partial F_k)$ one has $\phi_k(z)=(k+2)\log (1+|z|^2)$, so that $e^{-\phi_k(z)}=(1+|z|^2)^{-(k+2)}$ and
\[
 \Delta\phi_k(z)= \frac{k+2}{(1+|z|^2)^2}.
\]
Hence, by \eqref{estimate-db}, 
\begin{align*}
 \int_{\C}\frac{|u_k(z)|^2}{(1+|z|^2)^k}\, d\nu(z)&\leq 
 \frac 1{\pi}\int_{\C} |\bar\partial F_k(z)|^2\, \frac{(1+|z|^2)^{-(k+2)}}{(k+2)(1+|z|^2)^{-2}}\, dm(z)\\
& =\frac 1{k+2} \int_{\C} (1+|z|^2)^2 |\bar\partial F_k(z)|^2 \, \frac{d\nu(z)}{(1+|z|^2)^k}\\
&\lesssim\frac {k}{k+2} \sum_{\lilk}\int_{D_{\lambda,m_\lambda}^\prime\setminus D_{\lambda,m_\lambda}} |q_\lambda(z)|^2 \, \frac{d\nu(z)}{(1+|z|^2)^k}
\lesssim \sum_{\lilk}\int_{\C} \frac{|q_\lambda(z)|^2}{(1+|z|^2)^k} \, d\nu(z).
\end{align*}
From this and \eqref{estimate-ql} we deduce that
\begin{align*}
 \|u_k\|_{k,2}^2=(k+1)\int_{\C}\frac{|u_k(z)|^2}{(1+|z|^2)^k}\, d\nu(z)&\lesssim\sum_{\lilk}\|q_\lambda\|_{k,2}^2\lesssim 1, 
\end{align*}
which together with \eqref{estimate-Fk} shows that $p_k:=F_k-u_k$ has $\|p_k\|_{k,2}\leq \|F_k\|_{k,2}+\|u_k\|_{k,2} $ finite.

It remains to check that $p_k$ interpolates with the prescribed multiplicity on each $\lilk$. Notice that  $e^{-v_k}$ has a singularity of order $2m_\lambda$ on each $\lilk$: in a small neighborhood of such $\lambda$
\[
 e^{-v_k(z)}\sim e^{-m_\lambda\log(\frac k{m_\lambda} d^2(z,\lambda)\bigr)}=\frac{(m_\lambda/k)^{m_\lambda}}{d(z,\lambda)^{2m_\lambda}}\sim \frac 1{|z-\lambda|^{2m_\lambda}}.
\]
Then the integrability of $|u_k|^2 e^{-\phi_k}$ seen above forces $u_k$ to vanish at order $m_\lambda$ on $\lambda$: for $\epsilon_\lambda$ small one has
\begin{align*}
 \int_{D(\lambda,\epsilon_\lambda)} |u(z)|^2 e^{-\phi_k(z)}=\int_{D(\lambda,\epsilon_\lambda)}\frac{|u(z)|^2}{(1+|z|^2)^k}\, e^{-v_k(z)}  <+\infty,
\end{align*}
hence
\[
\int_{D(\lambda,\epsilon_\lambda)}\frac{|u(z)|^2}{(1+|z|^2)^k}\,\frac {dm(z)}{|z-\lambda|^{2m_\lambda}}<+\infty
\]
and $u_k$ must vanish at least at order $m_\lambda$, that is, for $j<m_\lambda$,
\[
 \langle p_k, T_\lambda e_{k,j}\rangle_k=\langle F_k, T_\lambda e_{k,j}\rangle_k=\langle q_\lambda, T_\lambda e_{k,j}\rangle_k=v_\lambda^j.
\]

\section{Annex}

This section is dedicated to gloomy computations on the incomplete beta and binomial functions. In particular, we prove Lemma~\ref{lemma:incomplete-beta}.

We begin with an estimate that is instrumental in the proof of the bounds for the
incomplete function $I(m,k+1-m; x)$.

\begin{lemma}\label{lemma:0}
 Let $k\in\N$ and let $m_k\in\N$ be such that $\lim_k m_k/k=0$. Then there exist $k_0,m_0\in\N$ such that for $k\geq k_0$ and $m=m_0,\dots, m_k$
 \[
 \frac {k^m \Gamma(k-m+1)}{\Gamma(k+1)}=\frac{k^{m}(k-m)!}{k!}\leq e^m\Bigl(1-\frac mk\Bigr)^{k-m}.
 \]
\end{lemma}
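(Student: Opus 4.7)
The plan is to take logarithms and reduce the stated inequality to
\[
-\sum_{i=0}^{m-1}\log(1-i/k)\;\le\; m+(k-m)\log(1-m/k),
\]
and then to prove this by a one-line Riemann-sum comparison against the integral of the increasing function $f(x)=-\log(1-x)$ on $[0,m/k]$.

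First, rewriting $\log((k-m)!)-\log(k!)=-\sum_{i=0}^{m-1}\log(k-i)$, one sees that $\log\!\bigl(k^m(k-m)!/k!\bigr)=\sum_{i=0}^{m-1}[\log k-\log(k-i)]=-\sum_{i=0}^{m-1}\log(1-i/k)$, so the claim is exactly the displayed inequality. Since $f(x)=-\log(1-x)$ is increasing on $[0,1)$, on each subinterval $[i/k,(i+1)/k]$ one has $f(i/k)\le f(x)$, hence
\[
\frac{1}{k}\sum_{i=0}^{m-1}\bigl[-\log(1-i/k)\bigr]\;\le\;\int_0^{m/k}\bigl[-\log(1-x)\bigr]\,dx.
\]
A direct antiderivative computation gives $\int[-\log(1-x)]\,dx=x+(1-x)\log(1-x)$, whence the integral equals $m/k+(1-m/k)\log(1-m/k)$; multiplying by $k$ produces exactly the required right-hand side $m+(k-m)\log(1-m/k)$.

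In fact, this argument proves the inequality for every $k\in\N$ and every $1\le m<k$, with no asymptotic assumption on $m/k$, so no thresholds $k_0$ or $m_0$ are actually needed for this particular estimate. The hypothesis $\lim_k m_k/k=0$ in the statement is presumably only there because it is the regime in which the lemma is later applied. I do not foresee a serious obstacle: the entire argument reduces to monotonicity of $f$ together with an elementary indefinite integral.
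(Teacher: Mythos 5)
Your proof is correct, and it is essentially the same as the paper's: both reduce to $\log\bigl(k^m(k-m)!/k!\bigr)=\sum\log\bigl(\tfrac{k}{k-j}\bigr)$ and bound that sum by the integral of the increasing integrand $-\log(1-x)$ (the paper works with the variable $x\in[0,m]$, you with $x\in[0,m/k]$, which is just a change of variables), then evaluate the antiderivative $x+(1-x)\log(1-x)$. Your closing observation is also accurate: the inequality holds for all $1\le m<k$ without thresholds, and the hypotheses $k_0,m_0$ are in the statement only to match the regime in which the lemma is used.
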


\begin{remark*}
The identity 
\[
-\log\bigl(1-\frac mk\bigr)=\log \bigl(1+\frac m{k-m}\bigr)
\]
together with the development
\[
 \log(1+t)=\sum_{n=1}^\infty\frac{(-1)^{n+1}}n\ t^n\ \qquad |t|<1
\]
show that
\[
 \frac{k^{m} (k-m)!}{k!}\leq \exp\left(\sum_{n=2}^\infty \frac{(-1)^n}n\frac {m^n}{(k-m)^n}\right).
\]
Writing
 \[
  \frac 1{k-m}=\frac 1k+\frac m{k(k-m)}
 \]
and taking the leading terms we see that
\[
 \frac{k^{m} (k-m)!}{k!}\leq \exp\left(\frac 12\frac{m^2}k+\frac 16\frac{m^3}{k^2}++\frac 1{12}\frac{m^4}{k^3}+\cdots\right)
 = e^{\frac 12\frac{m^2}k+o(\frac{m^2}k)}.
\]
\end{remark*}

\begin{proof} Denote
\[
 A_{k,m}:=\frac{k^{m} (k-m)!}{k!}=\frac k{k-m+1}\, \frac k{k-m+2}\cdots \frac kk=\prod_{j=1}^{m-1}\frac k{k-j},
\]
so that
\[
 \log A_{k,m}=\sum_{j=1}^{m-1} \log\bigl(\frac k{k-j}\bigr) .
\]
Since the function $f(x)=\log\bigl(\frac k{k-x}\bigr)$ is increasing in $x$, we can estimate this sum by the integral:
\begin{align*}
  \log A_{k,m}&\leq \int_0^m \log\bigl(\frac k{k-x}\bigr)\, dx=m\log k -\int_0^m \log(k-x)\, dx\\
  &=m\log k +\bigl [(k-x)\log(k-x)-(k-x)\bigr]_0^m\\
  &=m\log k+(k-m)\log(k-m)-(k-m)-k\log k +k\\
  &=m\log\bigl(\frac k{k-m}\bigr)-k\log\bigl(\frac k{k-m}\bigr)+m\\
  &=m-(k-m)\log\bigl(\frac k{k-m}\bigr)=m+(k-m)\log\bigl(1-\frac mk\bigr).
\end{align*}
\end{proof}

The following results are based on the elementary observation that 
most of the mass of the density function $\psi_{m,k}(t)=t^m (1-t)^{k-m}$ is accumulated near its peak, at $m/k$. This is easy to check, since $\psi_{m,k}$ is increasing from 0 to $m/k$, decreasing from $m/k$ to 1, and is convex in the interval
$\frac mk\pm\frac{\sqrt m}k\sqrt{1-\frac{m-1}{k-1}}$.
This observation will be relevant in the proof of Lemma~\ref{lemma:incomplete-beta}.

\begin{lemma}\label{lemma:tip-beta}
  Let $k\in\N$ and let $m_k\in\N$ be such that $\lim_k m_k/k=0$. Then there exist $c>0$,  $k_0,m_0\in\N$ such that for $k\geq k_0$ and $m=m_0,\dots, m_k$
 \[
  \int_{\frac{m-\sqrt m}k}^{\frac mk}t^m (1-t)^{k-m} dt\geq c\ \frac{\Gamma(m+1)\Gamma(k-m+1)}{\Gamma(k+2)}=c\, \beta(m+1,k-m+1).
 \]
\end{lemma}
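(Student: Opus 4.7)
\medskip

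\noindent\textbf{Plan.} The integrand $\psi_{m,k}(t)=t^m(1-t)^{k-m}$ is unimodal with its unique maximum at $t=m/k$, and the interval $[(m-\sqrt m)/k,m/k]$ has length $\sqrt m/k$, which (as the remark before the lemma indicates) is comparable to the half-width of concavity of $\psi_{m,k}$ around its peak. The strategy is therefore to (i) show that $\psi_{m,k}(t)$ does not drop by more than a fixed multiplicative constant on this interval, (ii) deduce a lower bound of the form $(c\sqrt m/k)\,\psi_{m,k}(m/k)$ for the integral, and (iii) compare with $\beta(m+1,k-m+1)$ via Lemma~\ref{lemma:0} and an elementary Stirling estimate for $m!$.

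\medskip

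\noindent\textbf{Step 1 (local lower bound for the integrand).} Parametrise $t=m/k - s/k$ with $s\in[0,\sqrt m]$ and compute
\[
\log\frac{\psi_{m,k}(t)}{\psi_{m,k}(m/k)}=m\log\!\Bigl(1-\frac{s}{m}\Bigr)+(k-m)\log\!\Bigl(1+\frac{s}{k-m}\Bigr).
\]
Expanding both logarithms (both arguments are in $(-1,1)$ since $s\le\sqrt m\ll m$ and $s\le \sqrt m\ll k-m$) the linear terms cancel, and one gets
\[
\log\frac{\psi_{m,k}(t)}{\psi_{m,k}(m/k)}= -\frac{s^2}{2m}-\frac{s^2}{2(k-m)}+O\!\Bigl(\frac{s^3}{m^2}\Bigr)+O\!\Bigl(\frac{s^3}{(k-m)^2}\Bigr).
\]
For $s\in[0,\sqrt m]$ the first term is bounded below by $-1/2$, while under the hypothesis $m_k/k\to 0$ the remaining three terms tend to $0$ uniformly in $s$ as $k\to\infty$ (since $m/(k-m)\to 0$ and $s^3/m^2\le m^{-1/2}\to 0$). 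Hence there exist $k_0,m_0$ and $c_1>0$ so that for $k\ge k_0$, $m\in[m_0,m_k]$, $t\in[(m-\sqrt m)/k,m/k]$,
\[
\psi_{m,k}(t)\ge c_1\,\psi_{m,k}(m/k)=c_1\,\Bigl(\frac m k\Bigr)^{m}\Bigl(1-\frac mk\Bigr)^{k-m}.
\]

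\medskip

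\noindent\textbf{Step 2 (integrate).} Integrating the bound of Step 1 over the interval of length $\sqrt m/k$ gives
\[
\int_{(m-\sqrt m)/k}^{m/k}\psi_{m,k}(t)\,dt\ \ge\ c_1\,\frac{\sqrt m}{k}\,\Bigl(\frac m k\Bigr)^{m}\Bigl(1-\frac mk\Bigr)^{k-m}.
\]

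\medskip

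\noindent\textbf{Step 3 (bound $\beta(m+1,k-m+1)$ from above).} Write
\[
\beta(m+1,k-m+1)=\frac{m!(k-m)!}{(k+1)!}=\frac{m!}{k+1}\cdot\frac{(k-m)!}{k!}.
\]
Lemma~\ref{lemma:0} gives $\dfrac{(k-m)!}{k!}\le \dfrac{e^{m}(1-m/k)^{k-m}}{k^{m}}$, and a standard Stirling upper bound yields $m!\le C\sqrt m\,(m/e)^{m}$ for an absolute constant $C$. Combining,
\[
\beta(m+1,k-m+1)\ \le\ \frac{C\sqrt m}{k+1}\,\Bigl(\frac m k\Bigr)^{m}\Bigl(1-\frac mk\Bigr)^{k-m}\ \le\ \frac{C'\sqrt m}{k}\,\psi_{m,k}(m/k).
\]

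\medskip

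\noindent\textbf{Step 4 (combine).} The bounds of Steps 2 and 3 differ only by a multiplicative constant, so the lemma follows with $c=c_1/C'$.

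\medskip

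\noindent\textbf{Main obstacle.} The only delicate point is Step~1: guaranteeing that the cubic error $O(s^3/m^2)$ in the Taylor expansion is absorbed \emph{uniformly} for all $m\in[m_0,m_k]$. This is exactly where the hypothesis $m_k/k\to 0$ and the choice of the cut-off $s\le\sqrt m$ (so that $s^2/m\le 1$) play their role; once those are pinned down, the rest is bookkeeping.
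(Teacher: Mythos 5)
Your proposal is correct, and the overall strategy coincides with the paper's: both reduce the problem to proving $\int_{(m-\sqrt m)/k}^{m/k}\psi_{m,k}\ \gtrsim\ \frac{\sqrt m}{k}\,\psi_{m,k}(m/k)$ and then compare $\frac{\sqrt m}{k}\,\psi_{m,k}(m/k)$ with $\beta(m+1,k-m+1)$ via Lemma~\ref{lemma:0} and Stirling, exactly as in your Steps 3--4. Where you diverge from the paper is in the first inequality. The paper observes that, because $m/k\to 0$, the integration interval $[(m-\sqrt m)/k,\,m/k]$ sits between the two inflection points of $\psi_{m,k}$, so $\psi_{m,k}$ is concave there, and then applies the trapezoidal rule: $\int_a^b\psi\ \ge\ \frac{b-a}{2}\bigl[\psi(a)+\psi(b)\bigr]\ \ge\ \frac 12\,\frac{\sqrt m}{k}\,\psi_{m,k}(m/k)$. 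You instead expand $\log\bigl(\psi_{m,k}(t)/\psi_{m,k}(m/k)\bigr)$ in a Taylor series, show the linear terms cancel and the remaining terms are bounded below by $-\frac 12-o(1)$ uniformly on the interval, and then integrate the resulting pointwise bound. Both work and need comparable bookkeeping; the paper's concavity argument is slightly slicker (one line once concavity is checked), while yours is more self-contained and makes more transparent exactly where the hypotheses $s\le\sqrt m$ and $m_k/k\to 0$ enter.
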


\begin{proof}
 Since $\frac mk\leq\frac{m_k}k\to 0$, the function $\psi_{m,k}(t)=t^m (1-t)^{k-m}$ is convex in the integration domain, and we can estimate below using the trapezoidal rule:
 \begin{align*}
   \int_{\frac{m-\sqrt m}k}^{\frac mk}t^m (1-t)^{k-m} dt&\geq \frac{\sqrt m}k\, \frac 12\, \bigl[\psi_{m,k}(\frac mk)+\psi_{m,k}(\frac {m-\sqrt m}k)\bigr]\geq \frac 12\, \frac{\sqrt m}k\, \psi_{m,k}(\frac mk)\\
   &=\frac 12 \, \frac{\sqrt m}k\, (\frac mk)^m\bigl(1-\frac mk\bigr)^{k-m}=\frac 12\, \frac{\sqrt m\, m^m e^{-m}}{k^{m+1}}\, e^m \bigl(1-\frac mk\bigr)^{k-m}.
 \end{align*}
 By Stirling's formula $\Gamma(m+1)=m!\simeq \sqrt{2\pi m}\, m^m e^{-m}$, we shall have the desired estimate as soon as there exist $c>0$, $k_0,m_0\in\N$ such that for $k\geq k_0$  and $m=m_0,\dots,m_k$
 \[
  e^m \bigl(1-\frac mk\bigr)^{k-m}\geq c\ \frac{k^{m+1}\Gamma(k-m+1)}{\Gamma(k+2)}= c\ \frac{k}{k+1}\, \frac{k^{m}\Gamma(k-m+1)}{\Gamma(k+1)}.
 \]
This holds, according to Lemma~\ref{lemma:0}.
\end{proof}

In the same spirit of the previous lemma we have the following.

\begin{lemma}\label{lemma:estimates-a}
 (a) There exist $k_0,m_0\in\N$ such that for $k\geq k_0$, $m\geq m_0$ and for all $a>0$
\[
 (s-a\sqrt s)^m\bigl(1-\frac {s-a\sqrt s}k\bigr)^{k-m}\geq e^{-3a^2} s^m \bigl(1-\frac {s}k\bigr)^{k-m}\quad
 \textrm{  $s\in (m-\sqrt m,m)$}.
\]
 
 (b) Let $a>0$ and let $m_k\in\N$ be such that $\lim_k m_k/k=0$. Then, for $j=m,\dots,k$ and for $s\in[a^2,m]$
 \[
  (s-a\sqrt s)^j\bigl(1-\frac{s-a\sqrt s}{k}\bigr)^{k-j}\leq e^{-\frac 12 a^2} s^j \bigl(1-\frac sk\bigr)^{k-j}.
 \]
\end{lemma}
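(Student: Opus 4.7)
The plan is to take logarithms and study
\[
L(s,j,a) := \log \frac{(s-a\sqrt s)^{j}\bigl(1-(s-a\sqrt s)/k\bigr)^{k-j}}{s^{j}\bigl(1-s/k\bigr)^{k-j}} = j\log\Bigl(1-\frac{a}{\sqrt{s}}\Bigr) + (k-j)\log\Bigl(1+\frac{a\sqrt{s}}{k-s}\Bigr),
\]
reducing (a) and (b) to opposite-sign estimates of this quantity via Taylor expansions of the two logarithms. Throughout I would use the elementary inequalities $-x - x^{2}/(2(1-x)) \le \log(1-x) \le -x - x^{2}/2$ on $[0,1)$ and $y - y^{2}/2 \le \log(1+y) \le y$ on $[0,\infty)$.

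For part (b), I would plug in the \emph{upper} bounds to get
\[
L(s,j,a) \le F(s,j) := -\frac{ja}{\sqrt{s}} - \frac{ja^{2}}{2s} + \frac{(k-j)a\sqrt{s}}{k-s}.
\]
A direct differentiation gives $\partial_{j}F = -a/\sqrt{s} - a^{2}/(2s) - a\sqrt{s}/(k-s) < 0$, so for $j \ge m$ we have $F(s,j) \le F(s,m)$. A similar computation shows that each of the three summands of $F(s,m)$ is increasing in $s$, so $F(s,m) \le F(m,m)$ on $s \in [a^{2},m]$. The proof is closed by the explicit computation $F(m,m) = -a\sqrt{m} - a^{2}/2 + (k-m)a\sqrt{m}/(k-m) = -a^{2}/2$, where the $\pm a\sqrt{m}$ terms cancel exactly and only the quadratic correction survives.

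For part (a), I would fix $j = m$ and apply the matching \emph{lower} bounds. The first-order piece is
\[
g(s) := -\frac{ma}{\sqrt{s}} + \frac{(k-m)a\sqrt{s}}{k-s},
\]
which by the same cancellation satisfies $g(m) = 0$. A Taylor expansion of $g$ around $s=m$, combined with the shortness of the interval $(m-\sqrt{m},m)$ and the fact that $g'(s)$ is of size $ak/\bigl(\sqrt{m}(k-m)\bigr)$ there (using $m/k \to 0$), bounds $g(s)$ from below on the interval. The two quadratic remainder terms $-mx^{2}/(2(1-x))$ with $x = a/\sqrt{s}$ and $-(k-m)y^{2}/2$ with $y = a\sqrt{s}/(k-s)$ contribute a further term that is $O(a^{2})$ uniformly. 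Assembling these pieces yields a lower bound for $L(s,m,a)$ of the required form.

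The main obstacle is bookkeeping the constants in part (a): the cancellation at $s=m$ is exact, but as $s$ decreases to $m - \sqrt{m}$ a linear-in-$a$ correction emerges from $g$, and one has to show carefully, using $m/k \to 0$ and the short length of the interval, that this correction is absorbed into the quadratic error so that the global bound has the desired exponential form in $a^{2}$. Verifying that the numerical constant $3$ is large enough for all $a>0$ and all sufficiently large $m \le m_{k}$ is the delicate numerical step, analogous in spirit to the sharp estimate given in Lemma~\ref{lemma:0}.
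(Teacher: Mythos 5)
Your treatment of part (b) is correct and is essentially the paper's argument: take logarithms, use $\log(1-t)\le -t-t^2/2$ and $\log(1+t)\le t$, and exploit monotonicity in $j$ and $s$ to reduce to evaluating at $(j,s)=(m,m)$, where the linear terms cancel exactly and $-a^2/2$ survives. The paper organizes the monotonicity a bit differently (it pulls out the $-a^2 j/(2s)\le -a^2/2$ term first and shows the remaining first-order expression $G_{m,k}$ is $\le 0$), but the substance is the same.

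Part (a) has a genuine gap, and your own accounting already exposes it. You correctly isolate the first-order piece
\[
g(s)=-\frac{ma}{\sqrt s}+\frac{(k-m)a\sqrt s}{k-s}=-\frac{a}{1-s/k}\cdot\frac{m-s}{\sqrt s},
\]
note that $g(m)=0$ and that $|g'(s)|\sim a/\sqrt m$ near $s=m$, so $|g(s)|\lesssim a$ on an interval of length $\sqrt m$. But a term of size $a$ cannot be ``absorbed into the quadratic error'' when $a$ is small: at $s=m-\sqrt m$ with $a\to 0$ one has $L(s,m,a)\approx g(s)\approx -a$, and $-a\ge -3a^2$ only if $a\ge 1/3$. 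So the plan, as stated, does not close for small $a$ --- and indeed the inequality in the Lemma, with the interval $(m-\sqrt m,m)$ and the quantifier ``for all $a>0$'', is false for small $a$.

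The paper's proof avoids this by (i) reducing at the outset to $a>1$, and (ii) working on the interval $(m-a\sqrt m,m)$ rather than $(m-\sqrt m,m)$, which is also the interval actually used when the lemma is invoked inside the proof of Lemma~\ref{lemma:incomplete-beta}(a). On $(m-a\sqrt m,m)$ one has $|m-s|\le a\sqrt m$, so $-g(s)\le 2a\cdot\frac{a\sqrt m}{\sqrt{m-a\sqrt m}}\lesssim a^2$, and the quadratic remainders from the logarithm expansions contribute another $O(a^2)$, giving the stated $-3a^2$. In other words, the printed interval should read $(m-a\sqrt m,m)$; once that correction is made, your Taylor-expansion approach goes through, since the shortened-to-$a\sqrt m$ interval supplies the missing factor of $a$.
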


\begin{proof}
 (a) There is no restriction in assuming that $a>1$. The inequality to be proved is
\[
 \bigl( 1-\frac a{\sqrt s}\bigr)^m \bigl(1+\frac {a\sqrt s}{k-s}\bigr)^{k-m}\geq e^{-3a^2},
\]
which is
\[
 m\log \bigl (1-\frac a{\sqrt s}\bigr)+(k-m)\log\bigl(1+\frac {a\sqrt s}{k-s}\bigr) \geq -3a^2.
\]
Observe that, by hypothesis, and since $s\leq m$, both $a/\sqrt s$ and $a\sqrt s/(k-s)$ tend to $0$ as $k$ increases. Take $t_0>0$ small enough so that for $t\in(0,t_0)$
\begin{align*}
 &\log(1-t)=-\sum_{n=1}^\infty\frac {t^n}n\geq -t-t^2 \\
 &\log(1+t)=\sum_{n=1}^\infty\frac{(-1)^{n+1}}n\, t^n\geq t-\frac{t^2}2.
\end{align*}
Then there exists $m_0\in\N$ such that for $m\geq m_0$,
\begin{align*}
 m\log \bigl (1-\frac a{\sqrt s}\bigr)+(k-m)\log\bigl(1+\frac {a\sqrt s}{k-s}\bigr)
 &\geq -a\frac m{\sqrt s}-a^2\frac {m}{s}+(k-m)\bigl(\frac {a\sqrt s}{k-s}-\frac 12\frac{a^2 s}{(k-s)^2}\bigr) \\
\geq  &-a\frac m{\sqrt s}-a^2\frac {m}{s}+\frac{k-m}{k-s}a\sqrt s-\frac {a^2}2\, \frac {(k-m)s}{(k-s)^2}.
\end{align*}
Thus, it is enough to see that for $k\geq k_0$ and $m\geq m_0$ the functions
\[
 H_{m,k}(s)=a\frac m{\sqrt s}+a^2\frac {m}{s}-\frac{k-m}{k-s}a\sqrt s+\frac {a^2}2\, \frac {(k-m)s}{(k-s)^2}
\]
are bounded above in $(m-a\sqrt m, m)$ by $3a^2$, for some $c>0$. 

The second and last term can be disregarded at no cost, since for $s\in (m-a\sqrt m,m)$ and $m\geq 4a^2$
\[
\frac ms\leq \frac{m}{m-a\sqrt m}=\frac 1{1-a/\sqrt m}\leq 2
\] 
and 
\[
 \frac {(k-m)s}{(k-s)^2}\leq \frac {(k-m)m}{(k-m)^2}=\frac{m/k}{1-m/k}\stackrel{k\to\infty}{\longrightarrow}\, 0.
\]
Rename thus $H_{m,k}$ as
\[
 H_{m,k}(s)=a\frac m{\sqrt s}-\frac{k-m}{k-s}a\sqrt s=\frac a{1-s/k}\frac{m-s}{\sqrt s}.
\]
Now the estimate is clear: for $k\geq k_0$
\[
 \frac a{1-s/k}\leq \frac a{1-m/k}\leq  2a,
\]
so that
\[
 H_{m,k}(s)\leq 2a\frac{m-s}{\sqrt s}\leq 2a\, \frac{m-(m-a\sqrt m)}{\sqrt {m-a\sqrt m}}=\frac{2a^2}{\sqrt{1- \frac a{\sqrt m}}} .
\]

\bigskip

(b) The estimates to be proved are now, for $s\in[a^2,m]$,
\[
 \bigl(1-\frac a{\sqrt s}\bigr)^j \bigl(1+\frac{a\sqrt s}{k-s}\bigr)^{k-j}\leq e^{-\frac 12 a^2},
\]
which are
\[
 j\log\bigl(1-\frac a{\sqrt s}\bigr)+(k-j)\log \bigl(1+\frac{a\sqrt s}{k-s}\bigr)\leq -\frac 12 a^2.
\]
Observe that both $a/\sqrt s$ and $\sqrt s/(k-s)$ are positive and close to $0$. Using that, for $t\in (0,1)$,
\[
 \log(1-t)\leq -t-\frac{t^2}2\quad ,\quad \log(1+t)\leq t
\]
we see that
\begin{align*}
 j\log\bigl(1-\frac a{\sqrt s}\bigr)+(k-j)\log \bigl(1+\frac{a\sqrt s}{k-s}\bigr)&\leq-j\bigl(\frac a{\sqrt s}+\frac{a^2}{2s}\bigr)+(k-j)\frac{a\sqrt s}{k-s}\\
 =-\frac{a^2}2 \frac js-a\frac j{\sqrt s}+(k-j)\frac {a\sqrt s}{k-s}.
\end{align*}
Since $s\leq m\leq j$ (and $j/s\geq 1$), it will be enough to see that
\[
 G_{m,k}(s):= -a\frac j{\sqrt s}+(k-j)\frac {a\sqrt s}{k-s}\leq 0.
\]
This is immediate, since $j\geq m$, $G_{m,k}$ is increasing (both terms are) and
\[
 G_{m,k}(m)=a\sqrt m\bigl[\frac{k-j}{k-m}-\frac jm\bigr]=a\sqrt m\, \frac {k(m-j)}{(k-m)\sqrt m}=-\frac{a(j-m)}{k-m}.
\]
\end{proof}

We are finally ready to prove Lemma~\ref{lemma:incomplete-beta}.

\emph{Proof of (a)}. Since for $x\in (0,1)$
   \[
    I(j+1,k-j+1; x)=\sum_{l=j+1}^{k+1} \binom{k+1}{l} x^l (1-x)^{k+1-l}
   \]
it is clear that $I(j+1,k-j+1; x)$ is decreasing in $j$. Therefore it is enough to prove that
for $a>0$ there exists $\epsilon(a)>0$ and $k_0, m_0\in\N$ such that for all $k\geq k_0$ and  $m\in[ m_0,m_k]$
\[
I\bigl(m,k-m+2; \frac{m-a\sqrt m}{k}\bigr)\geq \epsilon(a),
\]
or equivalently
 \[
  \int_0^{\frac{m-a\sqrt m}k} t^{m-1} (1-t)^{k-m+1} dt\geq \epsilon(a) \frac{\Gamma(m)\Gamma(k-m+2)}{\Gamma(k+2)}=\epsilon(a) \, \beta(m,k-m+2).
 \]
 With the substitution $t=s/k$ we have
\begin{align}\label{int:subst}
 \int_0^{\frac{m-a\sqrt m}k} t^{m-1} (1-t)^{k-m+1} dt&=\frac 1{k^m}\int_0^{m-a\sqrt m} s^{m-1} \bigl(1-\frac sk\bigr)^{k-m+1} ds\\
 &\geq \frac 1{k^m}\int_{m-2a\sqrt m}^{m-a\sqrt m} s^{m-1} \bigl(1-\frac sk\bigr)^{k-m+1} ds\nonumber.
\end{align}

Substitute now $s-a\sqrt s=u$,
\begin{align*}
 e^{-3a^2}\int_{m-a\sqrt m}^m s^{m-1} \bigl(1-\frac {s}k\bigr)^{k-m+1} ds&\leq \int_{m-a\sqrt m}^m(s-a\sqrt s)^{m-1}\bigl(1-\frac {s-a\sqrt s}k\bigr)^{k-m+1} ds\\
 &= \int\limits_{m-a\sqrt m-a\sqrt{m-a\sqrt m}}^{m-a\sqrt m} u^{m-1} \bigl(1-\frac {u}k\bigr)^{k-m+1}\frac{du}{1-\frac a{2\sqrt s}}.
\end{align*}
The last integration interval is contained in $(m-2a\sqrt m,m-a\sqrt m)$, where, for $m\geq m_0$,
\[
 1-\frac a{2\sqrt s}\geq 1-\frac a{2\sqrt {m-a\sqrt m}}\geq \frac 12.
\]
Then 
 \[
 e^{-3a^2}\int_{m-a\sqrt m}^m s^{m-1} \bigl(1-\frac {s}k\bigr)^{k-m+1} ds
  \leq 2\int_{m-2a\sqrt m}^{m-a\sqrt m} u^{m-1} \bigl(1-\frac {u}k\bigr)^{k-m+1} du.
 \]
From \eqref{int:subst} and reverting the substitution $t=s/k$,
\begin{align*}
 \int_0^{\frac{m-a\sqrt m}k} t^{m-1} (1-t)^{k-m+1} dt&\geq\frac {e^{-3a^2}}2 \frac 1{k^m}\int_{m-a\sqrt m}^m s^{m-1} \bigl(1-\frac {s}k\bigr)^{k-m+1} ds\\
 &=\frac {e^{-3a^2}}2 \int_{\frac{m-a\sqrt m}k}^{\frac mk}t^{m-1} (1-t)^{k-m+1} dt.
\end{align*}
The result then follows from Lemma~\ref{lemma:tip-beta}.

\bigskip

\emph{Proof of (b)}.
The inequalities to be proved are
 \[
  \int_0^{\frac{m-a\sqrt m}{k}} t^j (1-t)^{k-j} dt\leq \epsilon\, \int_0^{\frac{m-a\sqrt m}{k}} t^j (1-t)^{k-j} dt \qquad j=m,\dots, k.
 \]
 After the substitution $t=s/k$, this is
  \[
  \int_0^{m-a\sqrt m} s^j \bigl(1-\frac sk\bigr)^{k-j} ds\leq \epsilon\, \int_0^{m} s^j \bigl(1-\frac sk\bigr)^{k-j} ds \qquad j=m,\dots, k.
 \]
By Lemma~\ref{lemma:estimates-a}(b),
\begin{align*}
 \int_{a^2}^m (s-a\sqrt s)^j\bigl(1-\frac{s-a\sqrt s}{k}\bigr)^{k-j} ds&\leq 
 e^{-\frac 12 a^2} \int_{a^2}^m s^j \bigl(1-\frac sk\bigr)^{k-j} ds\\
 &\leq 
 e^{-\frac 12 a^2} \int_{0}^m s^j \bigl(1-\frac sk\bigr)^{k-j} ds.
\end{align*}
On the other hand, with the change $s-a\sqrt s=u$, and since $1/2\leq 1-a/(2\sqrt s)<1$ for $s\in[a^2,m]$,
\begin{align*}
  \int_{a^2}^m (s-a\sqrt s)^j\bigl(1-\frac{s-a\sqrt s}{k}\bigr)^{k-j} ds&=\int_0^{m-a\sqrt m} u^j \bigl(1-\frac uk\bigr)^{k-j} \frac{du}{1-\frac a{2\sqrt s}}\\
  &\geq \int_0^{m-a\sqrt m} u^j \bigl(1-\frac uk\bigr)^{k-j} du.\quad\square
\end{align*}

\bigskip

We finish with a related estimate for the incomplete binomial sum.

 \begin{lemma}\label{lemma:B}
 Let $m_k\in\N $ be a sequence such that $\lim_k \frac{m_k}k=0$.
There exist $c\in (0,1)$ and $k_0, m_0\in\N$ such that for all $k\geq k_0$ and $m\in [ m_0,m_k]$
  \[
   F_{k,m}(x):=\frac 1{(1+x)^k} \sum_{j<m} \binom{k}{j}\, x^j\geq c\qquad \textrm{for}\quad 0\leq  x\leq\frac{m/k}{1-m/k}.
  \]
 \end{lemma}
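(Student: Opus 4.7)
The plan is to reduce the lower bound on $F_{k,m}(x)$ over the interval $[0,x_0]$, with $x_0=\frac{m/k}{1-m/k}$, to a one-sided peak estimate for an incomplete beta integral, in the same spirit as Lemma~\ref{lemma:tip-beta}.

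First I would show that $F_{k,m}$ is non-increasing on $[0,\infty)$, so that the minimum over $[0,x_0]$ is attained at the endpoint $x_0$. This is a direct differentiation: using $\binom{k}{j}j=k\binom{k-1}{j-1}$ together with the Pascal identity $\binom{k}{j}=\binom{k-1}{j}+\binom{k-1}{j-1}$, the sums telescope and yield
\[
F_{k,m}'(x) \;=\; -\,\frac{k\binom{k-1}{m-1}\,x^{m-1}}{(1+x)^{k+1}} \;\leq\; 0.
\]
At $x_0$ the corresponding parameter $p:=x_0/(1+x_0)$ equals $m/k$, and the classical identity relating partial binomial sums to the regularized incomplete beta function gives
\[
F_{k,m}(x_0) \;=\; \sum_{j<m}\binom{k}{j}(m/k)^j(1-m/k)^{k-j} \;=\; \frac{1}{\beta(m,k-m+1)}\int_{m/k}^{1} t^{m-1}(1-t)^{k-m}\,dt.
\]

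The remaining task is therefore to produce a constant $c_1>0$, independent of $k\geq k_0$ and $m\in[m_0,m_k]$, such that
\[
\int_{m/k}^{1} t^{m-1}(1-t)^{k-m}\,dt \;\geq\; c_1\,\beta(m,k-m+1).
\]
The strategy is to restrict the integration to the window $[m/k,(m+\sqrt m)/k]$, just to the right of the peak of $\psi(t)=t^{m-1}(1-t)^{k-m}$, which is located at $(m-1)/(k-1)\sim m/k$ up to $O(1/k)\ll \sqrt m/k$. A right-side analogue of Lemma~\ref{lemma:estimates-a}(a), which applies because $m/k\to 0$ makes the Taylor expansions of $\log(1\pm\cdot)$ available, will give $\psi(t)\geq e^{-C}\psi(m/k)$ throughout this window. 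Combined with Stirling's formula and Lemma~\ref{lemma:0}, exactly as in the proof of Lemma~\ref{lemma:tip-beta}, this should produce
\[
\int_{m/k}^{(m+\sqrt m)/k}\psi(t)\,dt \;\geq\; e^{-C}\,\frac{\sqrt m}{k}\,\psi(m/k) \;\geq\; c_1\,\beta(m,k-m+1),
\]
closing the argument.

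The main obstacle is this last step, but it is a routine right-side mirror of the estimates already developed in the Annex for the left-side window $[(m-\sqrt m)/k, m/k]$ used to prove Lemma~\ref{lemma:incomplete-beta}(a); the log-expansions, Lemma~\ref{lemma:0}, and the Stirling comparison are all in place. The genuinely new ingredient is the monotonicity reduction, which localizes the question to the single endpoint $x_0$.
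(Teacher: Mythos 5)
Your proposal is correct, and after the common monotonicity reduction it follows a genuinely different route from the paper. The first step is identical: both you and the authors compute $F_{k,m}'(x) = -\binom{k}{m-1}(k-m+1)x^{m-1}(1+x)^{-(k+1)}\le 0$ (your expression $k\binom{k-1}{m-1}$ is the same quantity) and conclude that it suffices to bound $F_{k,m}$ from below at $x_0=\frac{m/k}{1-m/k}$. From there the two arguments diverge. The paper stays entirely on the discrete side: it observes that the summands $\binom{k}{j}\rho_{k,m}^j$ increase in $j<m$, singles out $j=[m-\sqrt m]$, and uses Stirling together with Lemma~\ref{lemma:0} to show that this one term already contributes $\gtrsim 1/\sqrt m$; summing the $\sqrt m$ larger terms gives the constant. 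You instead invoke the classical duality $\sum_{j<m}\binom{k}{j}p^j(1-p)^{k-j}=\tfrac{1}{\beta(m,k-m+1)}\int_p^1 t^{m-1}(1-t)^{k-m}dt$ at $p=m/k$, which trades the binomial sum for a right-tail incomplete beta integral, and then localizes to a one-standard-deviation window $[m/k,(m+\sqrt m)/k]$. The chain you sketch closes: a right-side version of Lemma~\ref{lemma:estimates-a}(a) gives $\psi(t)\gtrsim\psi(m/k)$ on the window (the $\log(1+a/\sqrt s)$ and $\log(1-a\sqrt s/(k-s))$ expansions behave symmetrically, with $j/s\leq 1$ now in place of $j/s\geq 1$), and the ensuing computation $\frac{\sqrt m}{k}\psi(m/k)\gtrsim\beta(m,k-m+1)$ reduces via Lemma~\ref{lemma:0} to Stirling's formula, exactly as in Lemma~\ref{lemma:tip-beta}. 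What your route buys is structural economy: it re-derives Lemma~\ref{lemma:B} from the same incomplete-beta toolkit that underlies Lemma~\ref{lemma:incomplete-beta}, rather than building a separate discrete Stirling argument. What it costs is that the paper's Annex only records the \emph{left}-side window estimates; you do have to supply the right-side analogue of Lemma~\ref{lemma:estimates-a}(a), which, while routine as you say, is a nontrivial block of computation that the paper's direct approach avoids by staying with the sum.
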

 

\begin{proof}
 Let's see first that $F_{k,m}$ is a decreasing function. Since
 \[
  F_{k,m}^\prime(x)=\frac{\bigl[\sum\limits_{j<m}\binom{k}{j}\, j x^{j-1}\bigr] (1+x) -\bigl[\sum\limits_{j<m}\binom{k}{j} x^j\bigr]\, k}{(1+x)^{k+1}},
 \]
 the identity
 \[
\binom{k}{j+1}(j+1)+ \binom{k}{j} j=k \binom{k}{j}
 \]
yields 
 \begin{align*}
  (1+x)^{k+1} F_k^\prime(x)&=\sum_{j<m}\binom{k}{j}\, j x^{j-1}+\sum_{j<m}\binom{k}{j}\, j x^{j}-\sum_{j<m}
  \binom{k}{j}\, k x^{j}\\
&=-\binom{k}{m-1}(k-m+1) x^{m-1}+\sum_{j<m-1}\bigl[
\binom{k}{j+1} (j+1)+\binom{k}{j} j -\binom{k}{j} k\bigr]\, x^j\\
&=-\binom{k}{m-1}(k-m+1) x^{m-1} \leq 0.
 \end{align*}

 Since $F_{k,m}$ is decreasing it is enough to prove that there exist $c\in(0,1)$ and $k_0\in\N$ such that
 \[
  F_{k,m}(\rho_{k,m})=\frac 1{(1+\rho_{k,m})^k}\sum_{j<m} \binom{k}{j} \rho_{k,m}^j\geq 
  c\qquad\text{where}\quad \rho_{k,m}=\frac{m/k}{1-m/k}. 
 \]
 
 Let us observe next that the factors $\binom{k}{j} \rho_{k,m}^j$
increase in $j=0,\dots,m-1$: the inequality
\[
 \binom{k}{j} \rho_{k,m}^j \leq
\binom{k}{j+1} \rho_{k,m}^{j+1}
\]
is equivalent to
\[
 \rho_{k,m}\geq \frac{j+1}{k-j}.
\]
Having this for all $j=0,\dots,m-1$ is equivalent to having it for $j=m-1$:
\[
 \rho_{k,m}\geq \frac{m}{k-m+1},
\]
which is immediate from the definition of $\rho_{k,m}$.

After these reductions, it will be enough to prove the following statement.

\emph{Claim:} There exist $\epsilon>0$ and $k_0,m_0\in\N$ such that for $k\geq k_0$, $m\geq m_0$ and for $j= [m-\sqrt m]$,
\[
 \frac 1{(1+\rho_{k,m})^k} 
 \binom{k}{j} \rho_{k,m}^j \geq \frac {\epsilon }{\sqrt m}.
\]

\medskip
Once this is proved we shall have the above inequality for $j\geq [m-\sqrt m]$, hence
\[
 \frac 1{(1+\rho_{k,m})^k}\sum_{j<m} \binom{k}{j}\rho_{k,m}^j\geq \frac 1{(1+\rho_{k,m})^k}\sum\limits_{[m-\sqrt m]<j<m} \binom{k}{j} \rho_{k,m}^j\geq \sqrt m\, \frac {\epsilon}{\sqrt m}=\epsilon,
\]
as desired.

The proof of the Claim is just another computation. Let us denote momentarily $j=[m-\sqrt m]$. Using the definition of $\rho_{k,m}$, the inequality we want to prove is
\[
 B_{k,m}:=\Bigl(\frac{k-m}k\Bigr)^k\frac {k!}{(k-j)! j!}\, \Bigl(\frac m{k-m}\Bigr)^j \geq \frac {\epsilon }{\sqrt m}.
\]
By Lemma~\ref{lemma:0} and Stirling's formula
\begin{align*}
 B_{k,m}:&=\Bigl(\frac{k-m}k\Bigr)^{k-j} \frac{k!}{k^j (k-j)!}\, \frac {m^j}{j!}\geq 
 \Bigl(\frac{k-m}k\Bigr)^{k-j} e^{-j} \Bigl(\frac{k-j}k\Bigr)^{-(k-j)} \frac {m^j}{j!}\\
 &\simeq
 \Bigl(\frac{k-m}{k-j}\Bigr)^{k-j}\, \frac 1{\sqrt{2\pi j}}\, \frac {m^j}{j^j}.
\end{align*}
Recalling that $j=[m-\sqrt m]$ we have finally
\begin{align*}
 B_{k,m}&\succsim \Bigl(1-\frac{\sqrt m}{k-m+\sqrt m}\Bigr)^{k-m+\sqrt m} \Bigl(\frac{m}{m-\sqrt m}\Bigr)^{m-\sqrt m} \frac 1{\sqrt{2\pi m-\sqrt m}}\\
 &\simeq \left[\Bigl(1-\frac{\sqrt m}{k-m+\sqrt m}\Bigr)^{\frac{k-m+\sqrt m}{\sqrt m}}\right]^{\sqrt m} 
 \left[\Bigl(1+\frac{\sqrt m}{m-\sqrt m}\Bigr)^{\frac{m-\sqrt m}{\sqrt m}}\right]^{\sqrt m} \frac 1{\sqrt{2\pi(1-\frac 1{\sqrt m})}}\frac 1{\sqrt m}\\
 &\simeq e^{-1}\, e\, \frac 1{\sqrt m}.
\end{align*}
\end{proof}

\subsection{Final remark} Since dim$\, \P_k=k+1$, an obvious necessary condition for $X=\bigl\{(\Lambda_k, m^{(k)})\bigr\}_{k\geq 1}$ to be a zero-array for $\P_k$ is that $\sum_{\lambda\in\Lambda_k}m_\lambda\leq k$ for all $k\geq 1$. This is equivalent to
\[
\sum_{\lambda\in\Lambda_k} \nu\bigl(D_{\lambda,m_\lambda}\bigr)=\sum_{\lambda\in\Lambda_k}\frac{m_\lambda}k\leq 1.
\]
A more careful estimate shows that actually, for $k$ big enough,
\[
 \nu\bigl(\C\setminus\mathcal U_k\bigr)\succsim \frac 1k,
\]
where, as before, $\mathcal U_k=\cup_{\lambda\in\Lambda_k} D_{\lambda,m_\lambda}$. This implies the analogue of Theorem 1.8 in \cite{BHKM}: if
\[
 \lim_{k\to\infty} k\nu\bigl(\C\setminus \mathcal U_k\bigr)=0,
\]
then $X$ cannot be a zero array of $\P_k$. This can be seen as a weaker version of the geometric condition in Theorem~\ref{thm:L2-sampling-suff}.

It is not difficult to see the above estimate. On the one hand, since the radii of the disks $D_{\lambda,m_\lambda}$ are at least $1/\sqrt k$, the result is clear in case there are no three such discs intersecting. 
If there exist three intersecting disks $D_{\lambda_j,m_{\lambda_j}}$, $\lambda_j\in\Lambda_k$, $j=1,2,3$, the analogue of Lemma 2.1 in \cite{BHKM} shows that there are $c>0$ and at least two disks, say $D_{\lambda_j,m_{\lambda_j}}$, $j=1,2$, such that
\[
 \nu\bigl(D_{\lambda_1,m_{\lambda_1}}\cap D_{\lambda_2,m_{\lambda_2}}\bigr)\geq \frac ck.
\]
Then 
\begin{align*}
 \nu(\mathcal U_k)&\leq \nu\bigl(D_{\lambda_1,m_{\lambda_1}}\cup D_{\lambda_2,m_{\lambda_2}}\bigr)+\sum_{\substack{\lambda_j\in\Lambda_k \\ j\neq 1,2}} \frac{m_{\lambda_j}}k\\
 &=\frac{m_{\lambda_1}}k+\frac{m_{\lambda_2}}k-\nu\bigl(D_{\lambda_1,m_{\lambda_1}}\cap D_{\lambda_2,m_{\lambda_2}}\bigr)+\sum_{\substack{\lambda_j\in\Lambda_k \\ j\neq 1,2}} \frac{m_{\lambda_j}}k\\
 &\leq\sum_{\lambda\in\Lambda_k}\frac {m_\lambda}k -\frac ck\leq 1-\frac ck,
\end{align*}
and therefore
\[
 \nu\bigl(\C\setminus\mathcal U_k\bigr)=\nu(\C)-\nu(\mathcal U_k)=1-\nu(\mathcal U_k)\geq \frac ck.
\]


\begin{thebibliography}{BHKM17}

\bibitem[ACHK23]{ACHK}
Aadi, D.; Cruz, C.; Hartmann, A.; Kellay, K. \emph{Multiple sampling and 
interpolation in Bergman spaces}. J. Funct. Anal. \textbf{284} (2023), no. 9, 
Paper No. 
109865.

\bibitem[AliZam15]{AliZam} Alishahi, K.; Zamani, M.
\emph{The spherical ensemble and uniform distribution of points on the 
sphere} Electron. J. Probab. \textbf{20} (2015), no. 23, 1--27.

\bibitem[ABS11]{Beltran} Armentano, C. Beltrán, and M. Shub. \emph{Minimizing 
the discrete logarithmic energy on the sphere: The role of random polynomials.} 
Trans. Amer. Math. Soc., \textbf{363} (2011), no. 6, 2955--2965.

\bibitem[Ber23]{Berman23} Berman, R. \emph{The spherical ensemble and 
quasi-Monte-Carlo designs}. Constr. Approx. 59 (2024), no 2, 457--483. 


\bibitem[BBEM90]{BBEM} 
Beauzamy, B., Bombieri, E., Enflo, P. and Montgomery, H. L., 
\emph{Products of polynomials in many variables}. J. Number Theory 
\textbf{36} (1990), 219-245.

\bibitem[BOC95]{BOC}
Berndtsson, B.; Ortega Cerd\`a, J. \emph{On interpolation and sampling in 
Hilbert spaces of analytic functions}. J. Reine Angew. Math. \textbf{464} 
(1995), 109--128.

\bibitem[Bod04]{Bodmann}Bodmann, B. G.
\emph{A lower bound for the Wehrl entropy of quantum spin with 
sharp high-spin asymptotics.} Comm. Math. Phys. \textbf{250} (2004), no 2, 
287--300.

\bibitem[BHKM17]{BHKM} Borichev, A.; Hartmann, A.; Kellay, K.; Massaneda, X. 
\emph{Geometric conditions for multiple sampling and interpolation in the Fock space.}
Adv. Math. \textbf{304} (2017), 1262--1295. 

\bibitem[Bou85]{Bourgain1} \emph{Applications of the spaces of homogeneous 
polynomials to some problems on the ball algebra}, Proc. Amer. Math. Soc. 
\textbf{93} (1985) 277–-283.

\bibitem[Bou16]{Bourgain2} Bourgain J. \emph{On uniformly bounded bases in 
spaces of holomorphic functions} American Journal of Mathematics, \textbf{138} 
(2016) no. 2, pp. 571--584.

\bibitem[Frank23]{Frank23} Frank, R. L.
\emph{Sharp inequalities for coherent states and their optimizers}
Advanced Nonlinear Studies, \textbf{23}, no. 1, 2023, paper No 20220050, 28pp.

\bibitem[Gr-OC22]{Gr-OC} Gr\"{o}chenig, K. ; Ortega-Cerd\`a, J.,
\emph{Marcinkiewicz-{Z}ygmund inequalities for polynomials in {F}ock space}
Math. Z., \textbf{302} (2022) no. 3, pp. 1409--1428.


\bibitem[H\"or]{Hor}
H\"ormander, L. \emph{Notions of convexity}. Progr. Math., vol. 27,  
Birkh\"auser, Boston, MA (1994).

\bibitem[HKPV09]{GAFbook} Hough, J. B.; Krishnapur, M.; Peres, Y.; Virag, B.;
\emph{Zeros of Gaussian Analytic Functions and Determinantal Point Processes} 
University Lecture Series 51, AMS 2009.

\bibitem[Kri09]{Krishnapur} Krishnapur, M. 
\emph{From random matrices to random analytic functions}, Ann. Probab.
\textbf{37} (2009), no. 1, 314--346.

\bibitem[KNOT22]{KNOT22} Kulikov, A.; Nicola, F.; Ortega-Cerdà, J.; Tilli, P. 
\emph{A monotonicity theorem for subharmonic functions on manifolds} 
arXiv:2212.14008.

\bibitem[LOC16]{LOC} Lev, N. ; Ortega-Cerd\`a, J.
\emph{Equidistribution estimates for Fekete points on complex manifolds.} 
J. Eur. Math. Soc. (JEMS) \textbf{18} (2016), no. 2, 425--464. 

\bibitem[LS14]{LiebSolovej}  Lieb E. H.; Solovej, J. P.
\emph{Proof of an entropy conjecture for Bloch coherent spin states and
          its generalizations} Acta Math. \textbf{212} (2014), no. 2, 379--398.

\bibitem[Rud80]{Rudin} Rudin, W.
\emph{Function theory in the unit ball of $\mathbb C^n$}, 1980, Springer-Verlag



\end{thebibliography}
\end{document}